\theoremstyle{plain}
\newtheorem{theo}{Theorem}[section]%[Definizioni]
\newtheorem{lem}[theo]{Lemma}%[Definizioni]
\newtheorem{prop}[theo]{Proposition}%[Definizioni]
\newtheorem{cor}[theo]{Corollary}%
\theoremstyle{definition}
\newtheorem{definition}[theo]{Definition}%[Definizioni]
\theoremstyle{remark}
\newtheorem{rem}[theo]{Remark}%[Definizioni]
\numberwithin{equation}{section}
\newcommand{\Z}{\mathbb{Z}}
\newcommand{\R}{\mathbb{R}}
\newcommand{\N}{\mathbb{N}}
\newcommand{\divrg}{\textrm{div}\,}
\title{Optimal three spheres inequality at the boundary for the Kirchhoff-Love plate's equation with Dirichlet conditions
\thanks{The first and the second authors are supported by FRA 2016 ``Problemi Inversi, dalla stabilit\`a alla ricostruzione'', Universit\`a degli Studi di Trieste. The second and the third authors are supported by Progetto GNAMPA 2017 ``Analisi di problemi inversi: stabilit\`a e ricostruzione'', Istituto Nazionale di Alta Matematica (INdAM).}}
\author{Giovanni Alessandrini\thanks{Dipartimento di Matematica e Geoscienze,
Universit\`a degli Studi di Trieste, via Valerio 12/1, 34127
Trieste, Italy. E-mail: \textsf{alessang@units.it}}, \  Edi
Rosset\thanks{Dipartimento di Matematica e Geoscienze,
Universit\`a degli Studi di Trieste, via Valerio 12/1, 34127
Trieste, Italy. E-mail: \textsf{rossedi@units.it}} \ and
Sergio Vessella\thanks{Dipartimento di Matematica e Informatica ``Ulisse Dini'', Universit\`a degli Studi di Firenze,Viale Morgagni 67/a,
50134 Firenze, Italy. E-mail:
\textsf{sergio.vessella@unifi.it}}}
\date{}
\begin{document}

\maketitle

\noindent \textbf{Abstract.} We prove a three spheres inequality with optimal exponent at the boundary for solutions to the Kirchhoff-Love plate's equation satisfying homogeneous Dirichlet conditions. This result implies the Strong Unique Continuation Property at the Boundary (SUCPB). Our approach is based on the method of Carleman estimates, and involves the construction of an ad hoc conformal mapping preserving the structure of the operator and the employment of a suitable reflection of the solution with respect to the flattened boundary which ensures the needed regularity of the extended solution. To the authors' knowledge, this is the first (nontrivial) SUCPB result for fourth-order equations with bi-Laplacian principal part.
\medskip

\medskip

\noindent \textbf{Mathematical Subject Classifications (2010): 35B60, 35J30, 74K20, 35R25, 35R30, 35B45}

\medskip

\medskip

\noindent \textbf{Key words:}  elastic plates,
three spheres inequalities, unique continuation, Carleman estimates.

\section{Introduction} \label{sec:
introduction}
The main purpose of this paper is to prove a Strong Unique Continuation Property at the Boundary (SUCPB) for the Kirchhoff-Love plate's equation. In order to introduce the subject of SUCPB we give some basic, although coarse, notion.

Let $\mathcal{L}$ be an elliptic operator of order $2m$, $m\in \N$, and let $\Omega$ be an open domain in $\mathbb{R}^N$, $N\geq2$. We say that $\mathcal{L}$ enjoys a SUCPB with respect to the Dirichlet boundary conditions if the following property holds true:

\begin{equation}\label{formulaz-sucpb}
\begin{cases}
 \mathcal{L}u=0, \mbox{ in } \Omega, \\
\frac{\partial^ju}{\partial n^j}=0, \mbox{ on } \Gamma, \quad\mbox{ for } j=0, 1, \ldots, m-1,   \\
\int_{\Omega\cap B_r(P)}u^2=\mathcal{O}(r^k), \mbox{ as } r\rightarrow 0, \forall k\in \mathbb{N},
\end{cases}\Longrightarrow \quad u\equiv 0 \mbox{ in } \Omega,
\end{equation}
where $\Gamma$ is an open portion (in the induced topology) of $\partial\Omega$, $n$ is outer unit normal, $P\in\Gamma$ and $B_r(P)$ is the ball of center $P$ and radius $r$.
Similarly, we say that $\mathcal{L}$ enjoys a SUCPB with respect to the set of normal boundary operators $\mathcal{B}_j$, $j\in J$, $\mathcal{B}_j$ of order $j$, $J\subset \{0, 1, \ldots, 2m-1\}$, $\sharp J = m$,  \cite{l:Fo}, if the analogous of \eqref{formulaz-sucpb} holds when the Dirichlet boundary conditions are replaced by
\begin{equation}
   \label{BC-generale}
   \mathcal{B}_ju=0, \quad \hbox{on } \Gamma, \quad \hbox{for } j\in J.
\end{equation}
The SUCPB has been studied for the second order elliptic operators in the last two decades, both in the case of homogeneous Dirichlet, Neumann and Robin boundary conditions, \cite{l:AdEsKe}, \cite{l:AdEs}, \cite{l:ARRV}, \cite{l:ApEsWaZh}, \cite{l:BaGa}, \cite{l:BoWo}, \cite{l:KeWa}, \cite{l:KuNy}, \cite{l:Si}. Although the conjecture that the SUCPB holds true when $\partial\Omega$ is of Lipschitz class is not yet proved, the SUCPB and the related quantitative estimates are today well enough understood for second-order elliptic equations.

Starting from the paper \cite{l:AlBeRoVe}, the SUCPB turned out to be a crucial property to prove optimal stability estimates for inverse elliptic boundary value problems with unknown boundaries. Mostly for this reason the investigation about the SUCPB has been successfully extended to second order parabolic equations \cite{l:CaRoVe}, \cite{l:DcRoVe}, \cite{l:EsFe}, \cite{l:EsFeVe}, \cite{l:Ve1} and to wave equation with time independent coefficients \cite{l:SiVe}, \cite{l:Ve2}.  For completeness we recall (coarsely) the formulation  of inverse boundary value problems with unknown boundaries in the elliptic context.

Assume that $\Omega$ is a bounded domain, with connected boundary $\partial \Omega$ of $C^{1,\alpha}$ class, and that $\partial\Omega$ is disjoint union of an accessible portion $\Gamma^{(a)}$ and of an inaccessible portion $\Gamma^{(i)}$. Given a symmetric, elliptic, Lipschitz matrix valued $A$ and $\psi \not\equiv 0$ such that $$\psi(x)=0, \mbox{ on } \Gamma^{(i)},$$
let $u$ be the solution to
\begin{equation*}
\left\{\begin{array}{ll}
\mbox{div}\left(A\nabla u\right)=0, \quad \hbox{in } \Omega,\\
u=\psi, \quad \hbox{on } \partial\Omega.
\end{array}\right.
\end{equation*}
Assuming that one knows
\begin{equation*}
\label{flux}
A\nabla u\cdot\nu,\quad \mbox{on } \Sigma,
\end{equation*}
where $\Sigma$ is an open portion of $\Gamma^{(a)}$, the inverse problem under consideration consists in determining the unknown boundary $\Gamma^{(i)}$. The proof of the uniqueness of $\Gamma^{(i)}$ is quite simple and requires the weak unique continuation property of elliptic operators. On the contrary, the optimal continuous dependence of $\Gamma^{(i)}$ from the Cauchy data $u$, $A\nabla u\cdot\nu$ on $\Sigma$,
%under additional information on quantitative character of regularity of $\partial \Omega$, $|\Omega|\leq M$
which is of logarithmic rate (see \cite{l:DcRo}), requires quantitative estimates of strong unique continuation at the interior and at the boundary, like the three spheres inequality, \cite{l:Ku}, \cite{l:La} and the doubling inequality, \cite{l:AdEs}, \cite{l:GaLi}.

Inverse problems with unknown boundaries have been studied in linear elasticity theory for elliptic systems \cite{l:mr03}, \cite{l:mr04}, \cite{l:mr09}, and for fourth-order elliptic equations \cite{l:mrv07}, \cite{l:mrv09}, \cite{l:mrv13}. It is clear enough that the unavailability of the SUCPB precludes proving optimal stability estimates for these inverse problems with unknown boundaries.

In spite of the fact that the strong unique continuation in the interior for fourth-order elliptic equation of the form
\begin{equation}\label{bilaplacian}
\Delta^2u+\sum_{|\alpha|\leq 3}c_{\alpha} D^{\alpha}u=0
\end{equation}
where $c_{\alpha}\in L^{\infty}(\Omega)$, is nowadays well understood, \cite{l:CoGr}, \cite{l:CoKo},  \cite{l:Ge}, \cite{l:LBo}, \cite{l:LiNaWa}, \cite{l:mrv07}, \cite{l:Sh}, to the authors knowledge, the SUCPB for equation like \eqref{bilaplacian} has not yet proved even for Dirichlet boundary conditions. In this regard it is worthwhile to emphasize that serious difficulties occur in performing Carleman method (the main method to prove the unique continuation property) for bi-Laplace operator \textit{near the boundaries}, we refer to \cite{l:LeRRob} for a thorough discussion and wide references on the topics.

In the present paper we begin to find results in this direction for the Kirchhoff-Love equation, describing thin isotropic elastic plates

\begin{equation}
    \label{eq:equazione_piastra-int}
    L(v) := {\rm div}\left ({\rm div} \left ( B(1-\nu)\nabla^2 v + B\nu \Delta v I_2 \right ) \right )=0, \qquad\hbox{in
    } \Omega\subset\mathbb{R}^2,
\end{equation}
where $v$ represents the transversal displacement, $B$ is the \emph{bending stiffness} and $\nu$ the \emph{Poisson's coefficient} (see \eqref{eq:3.stiffness}--\eqref{eq:3.E_nu} for the precise definitions).

Assuming $B,\nu\in C^4(\overline{\Omega})$ and $\Gamma$ of $C^{6, \alpha}$ class, we prove our main results: a three spheres inequality at the boundary with optimal exponent (see Theorem \ref{theo:40.teo} for the precise statement) and, as a byproduct, the following SUCPB result (see Corollary \ref{cor:SUCP})

\begin{equation}\label{formulaz-sucpb-piastra}
\begin{cases}
 Lv=0, \mbox{ in } \Omega, \\
v =\frac{\partial v}{\partial n}=0, \mbox{ on } \Gamma,   \\
\int_{\Omega\cap B_r(P)}v^2=\mathcal{O}(r^k), \mbox{ as } r\rightarrow 0, \forall k\in \mathbb{N},
\end{cases}\Longrightarrow \quad v\equiv 0 \mbox{ in } \Omega.
\end{equation}
In our proof, firstly we flatten the boundary $\Gamma$ by introducing a suitable conformal mapping (see Proposition \ref{prop:conf_map}), then we combine a reflection argument (briefly illustrated below) and the Carleman estimate
\begin{equation}
    \label{eq:24.4-intr}
	\sum_{k=0}^3 \tau^{6-2k}\int\rho^{2k+\epsilon-2-2\tau}|D^kU|^2dxdy\leq C
	\int\rho^{6-\epsilon-2\tau}(\Delta^2 U)^2dxdy,
\end{equation}
for every $\tau\geq \overline{\tau}$ and for every $U\in C^\infty_0(B_{\widetilde{R}_0}\setminus\{0\})$, where $0<\varepsilon<1$ is fixed and $\rho(x,y)\sim \sqrt{x^2+y^2}$ as $(x,y)\rightarrow (0,0)$, see \cite[Theorem 6.8]{l:mrv07} and here Proposition \ref{prop:Carleman} for the precise statement.

To enter a little more into details, let us outline the main steps of our proof.

a) Since equation \eqref{eq:equazione_piastra-int} can be rewritten in the form
\begin{equation}
    \label{eq:equazione_piastra_non_div-intr}
    \Delta^2 v= -2\frac{\nabla B}{B}\cdot \nabla\Delta v + q_2(v) \qquad\hbox{in
    } \Omega,
\end{equation}
where $q_2$ is a second order operator, the equation resulting after flattening $\Gamma$ by a conformal mapping preserves the same structure of \eqref{eq:equazione_piastra_non_div-intr} and, denoting by $u$ the solution in the new coordinates, we can write
\begin{equation}
    \label{eq:15.1a-intro}
\begin{cases}
 \Delta^2 u= a\cdot \nabla\Delta u + p_2(u), \qquad\hbox{in
    } B_1^+, \\
u(x,0)=u_y(x,0) =0, \quad \forall x\in (-1,1)
\end{cases}
\end{equation}
where $p_2$ is a second order operator.

b) We use the following reflection of $u$, \cite{l:Fa}, \cite{l:Jo}, \cite{l:Sa},
\begin{equation*}
    \overline{u}(x,y)=\left\{
  \begin{array}{cc}
    u(x,y), & \hbox{ in } B_1^+\\
    w(x,y)=-[u(x,-y)+2yu_y(x,-y)+y^2\Delta u(x,-y)], & \hbox{ in } B_1^-
  \end{array}
\right.
\end{equation*}
which has the advantage of ensuring that $\overline{u}\in H^4(B_1)$ if $u\in H^4(B_1^+)$ (see Proposition \ref{prop:16.1}), and then we apply the Carleman estimate \eqref{eq:24.4-intr} to $\xi \overline{u}$, where $\xi$ is a cut-off function. Nevertheless we have still a problem. Namely

c) Derivatives of $u$ up to the sixth order occur in the terms on the right-hand side of the Carleman estimate involving negative value of $y$, hence such terms cannot be absorbed in a standard way by the left hand side. In order to overcome this obstruction, we use Hardy inequality, \cite{l:HLP34}, \cite{l:T67}, stated in Proposition \ref{prop:Hardy}.

The paper is organized as follows. In Section \ref{sec:
notation} we introduce some notation and definitions and state our main results, Theorem \ref{theo:40.teo} and Corollary \ref{cor:SUCP}. In Section \ref{sec:
flat_boundary} we state Proposition \ref{prop:conf_map}, which introduces the conformal map which
realizes a local flattening of the boundary which preserves the structure of the differential operator.
Section \ref{sec:
Preliminary} contains some auxiliary results which shall be used in the proof of the three spheres inequality in the case of flat boundaries, precisely Propositions \ref{prop:16.1} and \ref{prop:19.2} concerning the reflection w.r.t. flat boundaries and its properties, a Hardy's inequality (Proposition \ref{prop:Hardy}), the Carleman estimate for bi-Laplace operator (Proposition \ref{prop:Carleman}), and some interpolation estimates (Lemmas \ref{lem:Agmon} and \ref{lem:intermezzo}).
In Section \ref{sec:3sfere} we establish the three spheres inequality with optimal exponents for
the case of flat boundaries, Proposition \ref{theo:40.prop3}, and then we derive the proof of our main result, Theorem \ref{theo:40.teo}.
Finally, in the Appendix, we give the proof of Proposition \ref{prop:conf_map} and of
the interpolation estimates contained in Lemma \ref{lem:intermezzo}.

\section{Notation} \label{sec:
notation}

We shall generally denote points in $\R^2$ by $x=(x_1,x_2)$ or $y=(y_1,y_2)$, except for
Sections \ref{sec:
Preliminary} and \ref{sec:3sfere} where we rename $x,y$ the coordinates in $\R^2$.

In places we will use equivalently the symbols $D$ and $\nabla$ to denote the gradient of a function. Also we use the multi-index notation.

We shall denote by $B_r(P)$ the disc in $\R^2$ of radius $r$ and
center $P$, by $B_r$ the disk of radius $r$ and
center $O$, by $B_r^+$, $B_r^-$ the hemidiscs in $\R^2$  of radius $r$ and
center $O$ contained in the halfplanes $\R^2_+= \{x_2>0\}$, $\R^2_-= \{x_2<0\}$ respectively, and by $R_{
a,b}$ the rectangle $(-a,a)\times(-b,b)$.

Given a matrix $A =(a_{ij})$, we shall denote by $|A|$ its Frobenius norm $|A|=\sqrt{\sum_{i,j}a_{ij}^2}$.

Along our proofs, we shall denote by $C$ a constant which may change {}from line to line.

\begin{definition}
  \label{def:reg_bordo} (${C}^{k,\alpha}$ regularity)
Let $\Omega$ be a bounded domain in ${\R}^{2}$. Given $k,\alpha$,
with $k\in\N$, $0<\alpha\leq 1$, we say that a portion $S$ of
$\partial \Omega$ is of \textit{class ${C}^{k,\alpha}$ with
constants $r_{0}$, $M_{0}>0$}, if, for any $P \in S$, there
exists a rigid transformation of coordinates under which we have
$P=0$ and
\begin{equation*}
  \Omega \cap R_{r_0,2M_0r_0}=\{x \in R_{r_0,2M_0r_0} \quad | \quad
x_{2}>g(x_1)
  \},
\end{equation*}
where $g$ is a ${C}^{k,\alpha}$ function on
$[-r_0,r_0]$
satisfying
\begin{equation*}
g(0)=g'(0)=0,
\end{equation*}
\begin{equation*}
\|g\|_{{C}^{k,\alpha}([-r_0,r_0])} \leq M_0r_0,
\end{equation*}
where
\begin{equation*}
\|g\|_{{C}^{k,\alpha}([-r_0,r_0])} = \sum_{i=0}^k  r_0^i\sup_{[-r_0,r_0]}|g^{(i)}|+r_0^{k+\alpha}|g|_{k,\alpha},
\end{equation*}

\begin{equation*}
|g|_{k,\alpha}= \sup_ {\overset{\scriptstyle t,s\in [-r_0,r_0]}{\scriptstyle
t\neq s}}\left\{\frac{|g^{(k)}(t) - g^{(k)}(s)|}{|t-s|^\alpha}\right\}.
\end{equation*}

\end{definition}

We shall consider an isotropic thin elastic plate $\Omega\times \left[-\frac{h}{2},\frac{h}{2}\right]$, having middle plane $\Omega$ and width $h$. Under the Kirchhoff-Love theory, the transversal displacement $v$ satisfies the following fourth-order partial differential equation

\begin{equation}
    \label{eq:equazione_piastra}
    L(v) := {\rm div}\left ({\rm div} \left ( B(1-\nu)\nabla^2 v + B\nu \Delta v I_2 \right ) \right )=0, \qquad\hbox{in
    } \Omega.
\end{equation}

Here the \emph{bending stiffness} $B$ is given by

\begin{equation}
  \label{eq:3.stiffness}
  B(x)=\frac{h^3}{12}\left(\frac{E(x)}{1-\nu^2(x)}\right),
\end{equation}
and the \emph{Young's modulus} $E$ and the \emph{Poisson's coefficient} $\nu$ can be written in terms of the Lam\'{e} moduli as follows
\begin{equation}
  \label{eq:3.E_nu}
  E(x)=\frac{\mu(x)(2\mu(x)+3\lambda(x))}{\mu(x)+\lambda(x)},\qquad\nu(x)=\frac{\lambda(x)}{2(\mu(x)+\lambda(x))}.
\end{equation}

We shall make the following strong convexity assumptions on the Lam\'{e} moduli
\begin{equation}
  \label{eq:3.Lame_convex}
  \mu(x)\geq \alpha_0>0,\qquad 2\mu(x)+3\lambda(x)\geq\gamma_0>0, \qquad \hbox{ in } \Omega,
\end{equation}
where $\alpha_0$, $\gamma_0$ are positive constants.

It is easy to see that equation \eqref{eq:equazione_piastra} can be rewritten in the form

\begin{equation}
    \label{eq:equazione_piastra_non_div}
    \Delta^2 v= \widetilde{a}\cdot \nabla\Delta v + \widetilde{q}_2(v) \qquad\hbox{in
    } \Omega,
\end{equation}
with
\begin{equation}
    \label{eq:vettore_a_tilde}
    \widetilde{a}=-2\frac{\nabla B}{B},
\end{equation}

\begin{equation}
    \label{eq:q_2}
    \widetilde{q}_2(v)=-\sum_{i,j=1}^2\frac{1}{B}\partial^2_{ij}(B(1-\nu)+\nu B\delta_{ij})\partial^2_{ij} v.
\end{equation}
Let
\begin{equation}
   \label{eq:Omega_r_0}
\Omega_{r_0} = \left\{ x\in R_{r_0,2M_0r_0}\ |\ x_2>g(x_1) \right\},
\end{equation}

\begin{equation}
   \label{eq:Gamma_r_0}
\Gamma_{r_0} = \left\{(x_1,g(x_1))\ |\ x_1\in (-r_0,r_0)\right\},
\end{equation}
with
\begin{equation*}
g(0)=g'(0)=0,
\end{equation*}

\begin{equation}
   \label{eq:regol_g}
\|g\|_{{C}^{6,\alpha}([-r_0,r_0])} \leq M_0r_0,
\end{equation}
for some $\alpha\in (0,1]$.
Let
$v\in H^2(\Omega_{r_0})$ satisfy
\begin{equation}
  \label{eq:equat_u_tilde}
  L(v)= 0, \quad \hbox{ in } \Omega_{r_0},
\end{equation}

\begin{equation}
  \label{eq:Diric_u_tilde}
  v =  \frac{\partial v}{\partial n}= 0, \quad \hbox{ on } \Gamma_{r_0},
\end{equation}
where $L$ is given by \eqref{eq:equazione_piastra} and $n$ denotes the outer unit normal.

Let us assume that the Lam\'{e} moduli $\lambda,\mu$ satisfies the strong convexity condition \eqref{eq:3.Lame_convex} and the following regularity assumptions

\begin{equation}
  \label{eq:C4Lame}
  \|\lambda\|_{C^4(\overline{\Omega}_{r_0})}, \|\mu\|_{C^4(\overline{\Omega}_{r_0})}\leq \Lambda_0.
\end{equation}

The regularity assumptions \eqref{eq:3.Lame_convex}, \eqref{eq:regol_g} and \eqref{eq:C4Lame} guarantee that $v\in H^6(\Omega_r)$, see for instance \cite{l:a65}.

\begin{theo} [{\bf Optimal three spheres inequality at the boundary}]
    \label{theo:40.teo}
		Under the above hypotheses, there exist $c<1$ only depending on $M_0$ and $\alpha$, $C>1$ only depending on $\alpha_0$, $\gamma_0$, $\Lambda_0$, $M_0$, $\alpha$, such that, for every $r_1<r_2<c r_0<r_0$,
\begin{equation}
    \label{eq:41.1}
\int_{B_{r_2}\cap \Omega_{r_0}}v^2\leq C\left(\frac{r_0}{r_2}\right)^C\left(\int_{B_{r_1}\cap \Omega_{r_0}}v^2\right)^\theta\left(\int_{B_{r_0}\cap \Omega_{r_0}}v^2\right)^{1-\theta},
\end{equation}	
where
\begin{equation}
    \label{eq:41.2}
\theta = \frac{\log\left(\frac{cr_0}{r_2}\right)}{\log\left(\frac{r_0}{r_1}\right)}.
\end{equation}	
		
\end{theo}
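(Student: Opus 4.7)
The plan is to flatten the boundary by a conformal change of variables, reflect the flattened solution across the $x$-axis in a way that preserves $H^4$ regularity, apply the Carleman estimate of Proposition \ref{prop:Carleman} to a radially cut-off reflected solution, and finally optimize in the Carleman parameter $\tau$ to obtain the interpolation \eqref{eq:41.1}. As a first step I invoke Proposition \ref{prop:conf_map} to flatten $\Gamma_{r_0}$ near $P=0$. Because $\Delta^{2}$ in two real variables is essentially conformally covariant up to a second-order perturbation, the image function $u$ satisfies a problem of the type \eqref{eq:15.1a-intro}, with controlled coefficients, on some half-disc $B_{R_0}^{+}$, and with homogeneous Dirichlet data on $\{y=0\}$.

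Next, I build the reflection $\overline{u}$ prescribed in Proposition \ref{prop:16.1}. The carefully chosen combination $w=-[u+2yu_y+y^{2}\Delta u]$ in the lower half is precisely what is needed to make $\overline{u}\in H^{4}(B_1)$, so the Carleman inequality \eqref{eq:24.4-intr} can legitimately be applied to $U=\xi\overline{u}$, where $\xi$ is a radial cutoff equal to $1$ on an annulus $\{r_1 \le \rho \le c\,r_0\}$ and supported away from the origin and from $\partial B_{R_0}$. Expanding $\Delta^{2}(\xi\overline{u})$ produces commutator terms supported in $\{\nabla\xi\neq 0\}$, which after the standard ball-splitting will yield the outer factor $\int_{B_{r_0}\cap\Omega_{r_0}}v^{2}$ in \eqref{eq:41.1}, together with an "interior" term $\xi^{2}(\Delta^{2}\overline{u})^{2}$. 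In $B^{+}$ this interior term is $\xi^{2}(a\cdot\nabla\Delta u+p_2(u))^{2}$ and involves only derivatives of $u$ up to order $3$, so it is absorbed by the left-hand side of \eqref{eq:24.4-intr} once $\tau$ is large.

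The delicate step, which I expect to be the main obstacle, is the $B^{-}$ part of the interior term: since $w$ contains $yu_y$ and $y^{2}\Delta u$, the quantity $\Delta^{2}w$ involves derivatives of $u(x,-y)$ of orders $4,5,6$, which the left-hand side of \eqref{eq:24.4-intr} does not control. The key observation I intend to exploit is that these top-order terms are multiplied by the factors $y$ or $y^{2}$ carried over from the definition of $w$. Applying Hardy's inequality (Proposition \ref{prop:Hardy}) in the $y$-variable, together with the interpolation estimates of Lemmas \ref{lem:Agmon} and \ref{lem:intermezzo}, I plan to trade each factor of $y$ for a $\rho$-weight and one $y$-derivative, iterating until every bad term is recast as one of the weighted norms $\tau^{6-2k}\int\rho^{2k+\epsilon-2-2\tau}|D^{k}u|^{2}$, $k\le 3$, that already appear on the left-hand side. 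A sufficiently large choice of $\tau$ then absorbs all these terms. This is precisely where the $H^{6}$ regularity of $v$, guaranteed by the smoothness hypotheses on $\Gamma$ and on the Lamé moduli, is used in an essential way.

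After this absorption the inequality reads, schematically,
\[
\tau^{6}\int_{B_{r_2}^{+}}\rho^{\epsilon-2-2\tau}u^{2}\,dxdy\le C\int_{\{\nabla\xi\neq 0\}}\rho^{6-\epsilon-2\tau}|\Delta^{2}\overline{u}|^{2}\,dxdy.
\]
Splitting the commutator region into a piece near $B_{r_1}$ and a piece near $B_{r_0}\setminus B_{r_2}$, and bounding each by the corresponding $L^{2}$ norm of $v$ via Cauchy--Schwarz and standard elliptic estimates, produces a Carleman-type inequality relating the three $L^{2}$ norms entering \eqref{eq:41.1}. Minimizing in $\tau\ge\overline{\tau}$ with the classical choice that balances the two weights $(r_1/r_2)^{2\tau}$ and $(r_0/r_2)^{-2\tau}$ yields the three-spheres inequality with the optimal exponent $\theta$ of \eqref{eq:41.2}. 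Since the conformal map of Proposition \ref{prop:conf_map} is bi-Lipschitz with constants depending only on $M_0$ and $\alpha$, the radii are distorted only by controlled constants, and the inequality transfers back from $u$ to $v$.
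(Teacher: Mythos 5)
Your outline reproduces the paper's architecture: conformal flattening via Proposition \ref{prop:conf_map}, the $H^4$-preserving reflection $\overline{u}$ of Proposition \ref{prop:16.1}, the Carleman estimate \eqref{eq:24.4} applied to $\xi\overline{u}$, Hardy's inequality to tame the lower-half contribution, and a final optimization in $\tau$. You also correctly single out the $B_1^-$ interior term as the crux and Hardy's inequality as the right tool.

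The gap is in how Hardy is actually brought to bear. ``Trading each factor of $y$ for a $\rho$-weight and one $y$-derivative, iterating'' does not work as stated: inequality \eqref{eq:24.1} replaces a $y^{-1}$-weight by one extra $y$-derivative (the opposite of absorbing a factor $y$ into $\rho$), and the interpolation Lemmas \ref{lem:Agmon}--\ref{lem:intermezzo} neither lower derivative order nor remove $y$-weights — in the paper they serve a different purpose, bounding the cutoff-commutator terms $J_0,J_1$ by $\|u\|_{L^2}$ as in \eqref{eq:36.2}--\eqref{eq:37.1}. The essential step you are missing is the algebraic identity of Proposition \ref{prop:19.2}: by substituting the definition of $w$, i.e.\ $y^2\Delta u(x,-y)=-w(x,y)-u(x,-y)-2y u_y(x,-y)$, back into $F_1=\Delta^2 w$, one shows that $F_1 = H + P_2(w)+P_3(u)$, where $H$ in \eqref{eq:19.2} is a $y^{-1}$-weighted combination of only \emph{second}-order derivatives of $u$ and $w$, and the remainder is directly absorbable. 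One must then verify the boundary-vanishing conditions \eqref{eq:23.1}--\eqref{eq:23.3}, which supply exactly the hypothesis $f(0)=0$ that Hardy's inequality requires; a single application of Hardy then yields order-$3$ quantities absorbed by the Carleman left-hand side for $\tau$ large. Without this structural decomposition and the boundary-vanishing verification, the $B_1^-$ interior term cannot be controlled, so the Hardy step as you describe it does not close.
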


\begin{cor} [{\bf Quantitative strong unique continuation at the boundary}]
   \label{cor:SUCP}
Under the above hypotheses and assuming $\int_{B_{r_0 }\cap\Omega_{r_0}}v^2>0$,
\begin{equation}
    \label{eq:suc1}
\int_{B_{r_1 }\cap\Omega_{r_0}}v^2 \geq \left(\frac{r_1}{r_0}\right)^{\frac{\log A}{\log \frac{r_2}{cr_0}}}
\int_{B_{r_0 }\cap\Omega_{r_0}}v^2,
\end{equation}
where
\begin{equation}
    \label{eq:suc2}
A= \frac{1}{C}\left(\frac{r_2}{r_0}\right)^C\frac{\int_{B_{r_2 }\cap\Omega_{r_0}}v^2}{\int_{B_{r_0 }\cap\Omega_{r_0}}v^2}<1,
\end{equation}
$c<1$ and $C>1$ being the constants appearing in Theorem \ref{theo:40.teo}.
\end{cor}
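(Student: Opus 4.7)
The plan is to derive Corollary \ref{cor:SUCP} as a direct algebraic consequence of Theorem \ref{theo:40.teo}; no further analytic input is needed, and the only work is a careful bookkeeping of exponents, so there is no real obstacle to overcome.

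Since $\int_{B_{r_0}\cap\Omega_{r_0}}v^2>0$ by hypothesis, I would divide both sides of \eqref{eq:41.1} by this positive quantity and then move the factor $C(r_0/r_2)^C$ to the left-hand side. This rearrangement produces exactly the combination $A$ of \eqref{eq:suc2} on the left:
\[
A \leq \left(\frac{\int_{B_{r_1}\cap\Omega_{r_0}}v^2}{\int_{B_{r_0}\cap\Omega_{r_0}}v^2}\right)^{\theta},
\]
with $\theta=\log(cr_0/r_2)/\log(r_0/r_1)$. The assumption $r_1<r_2<cr_0<r_0$ guarantees $0<\theta<1$, so I may raise both sides to the positive power $1/\theta$, obtaining
\[
\frac{\int_{B_{r_1}\cap\Omega_{r_0}}v^2}{\int_{B_{r_0}\cap\Omega_{r_0}}v^2}\geq A^{1/\theta}=\exp\left(\frac{\log A\cdot\log(r_0/r_1)}{\log(cr_0/r_2)}\right)=\left(\frac{r_1}{r_0}\right)^{\log A/\log(r_2/cr_0)},
\]
where the last equality uses $\log(r_1/r_0)=-\log(r_0/r_1)$ and $\log(r_2/cr_0)=-\log(cr_0/r_2)$. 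This is precisely \eqref{eq:suc1}.

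It remains to verify the inequality $A<1$ asserted in \eqref{eq:suc2}, which is the only point where a short justification is needed. The inclusion $B_{r_2}\subset B_{r_0}$ bounds the ratio of integrals by $1$, the factor $(r_2/r_0)^C$ is strictly less than $1$ because $r_2<r_0$ and $C>0$, and $1/C<1$ since $C>1$; hence $A<1$. Consequently $\log A<0$, and since $\log(r_2/cr_0)<0$ by the choice $r_2<cr_0$, the exponent $\log A/\log(r_2/cr_0)$ on the right of \eqref{eq:suc1} is positive. This means \eqref{eq:suc1} does furnish a genuine polynomial-in-$r_1$ lower bound which vanishes in a controlled way as $r_1\to 0^+$, giving the desired quantitative SUCPB statement.
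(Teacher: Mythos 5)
Your proposal is correct and follows exactly the route the paper indicates (the paper's proof is the one-line remark that \eqref{eq:suc1}--\eqref{eq:suc2} follow by "reassembling the terms" in \eqref{eq:41.1}); you have simply spelled out that rearrangement, the exponentiation, and the check that $A<1$, all of which are accurate.
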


\begin{proof}
Reassembling the terms in \eqref{eq:41.1}, it is straightforward to obtain \eqref{eq:suc1}-\eqref{eq:suc2}. The SUCBP follows immediately.
\end{proof}

\section{Reduction to a flat boundary} \label{sec:
flat_boundary}

The following Proposition introduces a conformal map which flattens the boundary
$\Gamma_{r_0}$ and preserves the structure of equation \eqref{eq:equazione_piastra_non_div}.

\begin{prop} [{\bf Conformal mapping}]
    \label{prop:conf_map}
Under the hypotheses of Theorem \ref{theo:40.teo}, there exists an injective sense preserving differentiable map
\begin{equation*}
\Phi=(\varphi,\psi):[-1,1]\times[0,1]\rightarrow \overline{\Omega}_{r_0}
\end{equation*}
which is conformal, and it satisfies
\begin{equation}
   \label{eq:9.assente}
  \Phi((-1,1)\times(0,1))\supset B_{\frac{r_0}{K}}(0)\cap \Omega_{r_0},
\end{equation}
\begin{equation}
   \label{eq:9.2b}
  \Phi(([-1,1]\times\{0\})= \left\{ (x_1,g(x_1))\ |\ x_1\in [-r_1,r_1]\right\},
\end{equation}

\begin{equation}
   \label{eq:9.2a}
  \Phi(0,0)= (0,0),
\end{equation}

\begin{equation}
    \label{eq:gradPhi}
  \frac{c_0r_0}{2C_0}\leq |D\Phi(y)|\leq \frac{r_0}{2}, \quad \forall y\in [-1,1]\times[0,1],
\end{equation}
\begin{equation}
   \label{eq:gradPhiInv}
\frac{4}{r_0}\leq |D\Phi^{-1}(x)|\leq \frac{4C_0}{c_0r_0}, \quad\forall x\in \Phi([-1,1]\times[0,1]),
\end{equation}
\begin{equation}
   \label{eq:stimaPhi}
|\Phi(y)|\leq \frac{r_0}{2}|y|, \quad \forall y\in [-1,1]\times[0,1],
\end{equation}
\begin{equation}
   \label{eq:stimaPhiInv}
|\Phi^{-1}(x)| \leq
\frac{K}{r_0}|x|, \quad \forall x\in \Phi([-1,1]\times[0,1]),
\end{equation}
with
$K>8$, $0<c_0<C_0$ being constants only depending on $M_0$ and $\alpha$.

Letting
\begin{equation}
  \label{eq:def_sol_composta}
  u(y) = v(\Phi(y)), \quad y\in [-1,1]\times[0,1],
\end{equation}
then $u\in H^6((-1,1)\times(0,1))$ and it satisfies
\begin{equation}
    \label{eq:equazione_sol_composta}
    \Delta^2 u= a\cdot \nabla\Delta u + q_2(u), \qquad\hbox{in
    } (-1,1)\times(0,1),
\end{equation}
\begin{equation}
    \label{eq:Dirichlet_sol_composta}
    u(y_1,0)= u_{y_2}(y_1,0) =0, \quad \forall y_1\in (-1,1),
\end{equation}
where
\begin{equation*}
  a(y) = |\nabla \varphi(y)|^2\left([D\Phi(y)]^{-1}\widetilde{a}(\Phi(y))-2\nabla(|\nabla \varphi(y)|^{-2})\right),
\end{equation*}
$a\in C^3([-1,1]\times[0,1], \R^2)$, $q_2=\sum_{|\alpha|\leq 2}c_\alpha D^\alpha$ is a second order elliptic operator with coefficients $c_\alpha\in C^2([-1,1]\times[0,1])$,
 satisfying
\begin{equation}
    \label{eq:15.2}
    \|a\|_{ C^3([-1,1]\times[0,1], \R^2)}\leq M_1,\quad \|c_\alpha\|_{ C^2([-1,1]\times[0,1])}\leq M_1,
\end{equation}
with $M_1>0$ only depending on $M_0, \alpha, \alpha_0, \gamma_0, \Lambda_0$.
\end{prop}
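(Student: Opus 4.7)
My plan is to obtain $\Phi$ as a composition of two conformal maps. First, by the Riemann mapping theorem, there is a conformal bijection $\Psi_1$ from the upper half-disk $D^+=\{|z|<1,\,\mathrm{Im}\,z>0\}$ onto a simply connected neighborhood of the origin in $\Omega_{r_0}$ whose lower boundary arc lies in $\Gamma_{r_0}$; the three real parameters are fixed by requiring $\Psi_1(0)=0$ and that the diameter $(-1,1)\times\{0\}$ maps onto the $\Gamma_{r_0}$-arc, orientation-preservingly. Second, an elliptic-integral formula yields a conformal bijection $\Psi_2:(-1,1)\times(0,1)\to D^+$ sending the bottom side to the real diameter. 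Setting $\Phi=\Psi_1\circ\Psi_2$ gives the map. Boundary regularity follows from the Kellogg-Warschawski theorem applied to $\Psi_1$ (with $\Gamma_{r_0}\in C^{6,\alpha}$) together with the explicit smoothness of $\Psi_2$, so $\Phi\in C^{6,\alpha}$ up to the closed rectangle.

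\textbf{Quantitative estimates.} Bounds \eqref{eq:gradPhi}--\eqref{eq:stimaPhiInv} I would derive by rescaling to $r_0=1$, then combining Koebe-type distortion control on the conformal factor $|\Phi'|=|\nabla\varphi|$ with Schauder estimates for the harmonic components $\varphi,\psi$, whose boundary data are controlled by $\|g\|_{C^{6,\alpha}}\leq M_0 r_0$. Scale invariance makes all constants depend only on $M_0$ and $\alpha$. Integrating $|D\Phi|$ and $|D\Phi^{-1}|$ along straight segments from the origin then gives \eqref{eq:stimaPhi}, \eqref{eq:stimaPhiInv}.

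\textbf{Transformed equation.} The key algebraic input is that for a conformal $\Phi$ with Jacobian $J=|\nabla\varphi|^2$,
\begin{equation*}
\Delta(v\circ\Phi)=J\,(\Delta v)\circ\Phi.
\end{equation*}
Iterating,
\begin{equation*}
\Delta^2 u=J^2(\Delta^2 v)\circ\Phi+2\nabla J\cdot\nabla\bigl((\Delta v)\circ\Phi\bigr)+\Delta J\cdot (\Delta v)\circ\Phi.
\end{equation*}
I would substitute $\Delta^2 v=\widetilde{a}\cdot\nabla\Delta v+\widetilde{q}_2(v)$ from \eqref{eq:equazione_piastra_non_div}, use the chain rule together with the conformal identity $(D\Phi)^{-T}=D\Phi/J$ to get $(\nabla_x\Delta v)\circ\Phi=(D\Phi/J)\,\nabla_y(\Delta u/J)$, and expand. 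All third-order terms in $u$ collect into the single vector-field term $a\cdot\nabla\Delta u$ with
\begin{equation*}
a=J\bigl([D\Phi]^{-1}\widetilde{a}\circ\Phi-2\nabla(1/J)\bigr),
\end{equation*}
matching the stated formula, while every other contribution is of order at most two in $u$ and packs into $q_2(u)$. The bounds \eqref{eq:15.2} then follow by combining the $C^{6,\alpha}$ control on $\Phi$ (and hence on $J$ and $1/J$, which is bounded away from zero by \eqref{eq:gradPhi}) with the $C^4$ bounds on $\lambda,\mu$ from \eqref{eq:C4Lame}.

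\textbf{Boundary conditions and main obstacle.} On $\{y_2=0\}$, $\Phi$ parametrizes $\Gamma_{r_0}$, so $u(y_1,0)=v\circ\Phi=0$. For the normal derivative, the Cauchy-Riemann equations for $\Phi$ imply $\Phi_{y_2}\perp\Phi_{y_1}$; since $\Phi_{y_1}$ is tangent to $\Gamma_{r_0}$, $\Phi_{y_2}$ is parallel to the outer normal $n$, so $u_{y_2}(y_1,0)=\nabla v\cdot\Phi_{y_2}=|\Phi_{y_2}|(\nabla v\cdot n)=0$. The principal technical obstacle I anticipate is \emph{not} the algebraic derivation of the transformed equation, which is essentially forced by conformal invariance, but rather the quantitative estimates in \eqref{eq:gradPhi}--\eqref{eq:stimaPhiInv} with constants depending only on $M_0,\alpha$: this demands carefully tracing the dependence inside the Kellogg-Warschawski argument (or equivalently working through a singular-integral representation of the conformal factor via the Hilbert transform of $\log|g'|$-type data) to ensure scale-invariant Schauder constants.
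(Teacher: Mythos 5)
Your algebraic derivation of the transformed equation (via $\Delta(v\circ\Phi)=J\,(\Delta v)\circ\Phi$ and the conformal identity $(D\Phi)^{-T}=D\Phi/J$) and your argument for the boundary conditions are correct and essentially the same as the paper's. The real divergence is in how the conformal map is built, and that is where your sketch leaves the substance undone.

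The paper does \emph{not} invoke the Riemann mapping theorem. Instead it constructs the map explicitly: it extends $g$ to $[-2r_0,2r_0]$, solves the mixed Dirichlet--Neumann problem $\Delta k=0$ in the extended domain $\widetilde\Omega_{r_0}$ with $k=0$ on the graph, $k=1$ on the top, and $k_{x_1}=0$ on the lateral sides, then takes the harmonic conjugate $h$ of $-k$ and sets $\Psi=h+ik$, which maps $\widetilde\Omega_{r_0}$ conformally onto a rectangle $[a,b]\times[0,1]$. Composing $\Psi^{-1}$ with an explicit homothety gives $\Phi$. The payoff is that all quantitative bounds \eqref{eq:gradPhi}--\eqref{eq:stimaPhiInv} come out of classical elliptic tools with traceable constants: Schauder estimates give the upper bound $|\nabla k|\leq C_0/r_0$, a reflection-and-periodization trick turns the mixed problem into a problem on an infinite strip where Harnack plus the Hopf boundary point lemma give the crucial \emph{lower} bound $\partial_{x_2}k\geq c_0/r_0$, and elementary boundary analysis shows $\Psi$ is bijective onto the rectangle. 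Every constant is manifestly controlled by $M_0,\alpha$.

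Your route (Riemann map $\Psi_1$ plus an elliptic-integral map $\Psi_2$ from the rectangle to the half-disc, then Kellogg--Warschawski for boundary $C^{6,\alpha}$-regularity and Koebe distortion for pointwise bounds) is a legitimate alternative in principle, but you yourself flag the crux: turning these soft existence and regularity theorems into estimates whose constants depend \emph{only} on $M_0$ and $\alpha$. The Riemann mapping theorem is non-quantitative, Koebe distortion gives interior control but degenerates near $\partial D^+$, and the Kellogg--Warschawski constants depend on the target domain in ways that need to be re-derived from scratch to get the scale-invariant dependence claimed. That unfinished step is not a minor detail; it is precisely what the paper's explicit harmonic-conjugate construction is designed to deliver for free. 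As written, your proposal establishes the easy half (the transformed equation and boundary conditions) and names, but does not close, the hard half (the uniform bi-Lipschitz estimates on $\Phi$).

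One further small imprecision: you invoke the Riemann mapping theorem to produce a conformal bijection from $D^+$ onto ``a simply connected neighborhood of the origin in $\Omega_{r_0}$,'' but the theorem requires you to fix the target domain first; you would need to specify a concrete Jordan subdomain of $\Omega_{r_0}$ whose boundary consists of an arc of $\Gamma_{r_0}$ and a curve in the interior, and then use a reflection argument (since $D^+$ is not a disc) — this is doable but is glossed over.
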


The explicit construction of the conformal map $\Phi$ and the proof of the above Proposition are postponed to the Appendix.

\section{Preliminary results} \label{sec:
Preliminary}

In this paragraph, for simplicity of notation, we find it convenient to rename $x,y$ the coordinates in $\R^2$ instead of $y_1,y_2$.

Let $u\in H^6(B_1^+)$ be a solution to
\begin{equation}
    \label{eq:15.1a}
    \Delta^2 u= a\cdot \nabla\Delta u + q_2(u), \qquad\hbox{in
    } B_1^+,
\end{equation}
\begin{equation}
    \label{eq:15.1b}
    u(x,0)=u_y(x,0) =0, \quad \forall x\in (-1,1),
\end{equation}
with $q_2=\sum_{|\alpha|\leq 2}c_\alpha D^\alpha$,
\begin{equation}
    \label{eq:15.2_bis}
    \|a\|_{ C^3(\overline{B}_1^+, \R^2)}\leq M_1,\quad \|c_\alpha\|_{ C^2(\overline{B}_1^+)}\leq M_1,
\end{equation}
for some positive constant $M_1$.

Let us define the following extension of $u$ to $B_1$ (see \cite{l:Jo})
\begin{equation}
    \label{eq:16.1}
    \overline{u}(x,y)=\left\{
  \begin{array}{cc}
    u(x,y), & \hbox{ in } B_1^+\\
    w(x,y), & \hbox{ in } B_1^-
  \end{array}
\right.
\end{equation}
where
\begin{equation}
    \label{eq:16.2}
    w(x,y)= -[u(x,-y)+2yu_y(x,-y)+y^2\Delta u(x,-y)].
\end{equation}

\begin{prop}
    \label{prop:16.1}
Let
\begin{equation}
    \label{eq:16.3}
F:=a\cdot \nabla\Delta u + q_2(u).
\end{equation}
Then $F\in H^2(B_1^+)$, $\overline{u}\in H^4(B_1)$,
\begin{equation}
    \label{eq:16.4}
\Delta^2 \overline{u} = \overline{F},\quad \hbox{ in } B_1,
\end{equation}
where
\begin{equation}
    \label{eq:16.5}
    \overline{F}(x,y)=\left\{
  \begin{array}{cc}
    F(x,y), & \hbox{ in } B_1^+,\\
    F_1(x,y), & \hbox{ in } B_1^-,
  \end{array}
\right.
\end{equation}
and
\begin{equation}
    \label{eq:16.6}
    F_1(x,y)= -[5F(x,-y)-6yF_y(x,-y)+y^2\Delta F(x,-y)].
\end{equation}
\end{prop}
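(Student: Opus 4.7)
The plan is to prove the three claims in order. The assertion $F \in H^2(B_1^+)$ is immediate from the regularity hypotheses: since $u \in H^6(B_1^+)$ one has $\nabla\Delta u \in H^3$ and $q_2(u) \in H^4$, while the coefficients $a,c_\alpha$ are $C^3,C^2$ by \eqref{eq:15.2_bis}, so $F = a\cdot\nabla\Delta u + q_2(u) \in H^2(B_1^+)$.

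To show $\overline{u} \in H^4(B_1)$, I would first observe that $w \in H^4(B_1^-)$ by the chain rule, since each term in \eqref{eq:16.2} is a smooth polynomial in $y$ times a derivative of $u$ of order at most two precomposed with the reflection $(x,y) \mapsto (x,-y)$, and $u \in H^6(B_1^+)$. Then, by the standard Sobolev gluing lemma, $\overline{u} \in H^4(B_1)$ provided the traces of $\partial_y^j \overline{u}$ on $\{y=0\}$ match from the two sides for $j=0,1,2,3$. To verify this, I would differentiate \eqref{eq:16.2} directly and exploit the Dirichlet conditions \eqref{eq:15.1b}, which force the vanishing on $\{y=0\}$ of every tangential derivative of $u$ and of $u_y$; in particular $u_{xx}(x,0)=u_{xxy}(x,0)=0$. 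A short calculation then gives
\begin{equation*}
w(x,0)=0,\qquad w_y(x,0)=0,\qquad w_{yy}(x,0)=u_{yy}(x,0),\qquad w_{yyy}(x,0)=u_{yyy}(x,0),
\end{equation*}
the last identity using $(\Delta u)_y(x,0)=u_{yyy}(x,0)$. This is exactly why the ansatz \eqref{eq:16.2} is tailored to Dirichlet data.

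The identity $\Delta^2\overline{u}=\overline{F}$ now follows if I can verify that $\Delta^2 w = F_1$ pointwise in $B_1^-$: on $B_1^+$ the equation is \eqref{eq:15.1a} itself, and, thanks to $\overline{u} \in H^4$, the two pieces glue to yield the identity in $L^2(B_1)$. To compute $\Delta^2 w$ I would apply the product rule $\Delta(\phi h)=\phi\Delta h + 2\nabla\phi\cdot\nabla h + h\Delta\phi$ termwise to $w = -\widetilde{u}+2y\widetilde{u}_y - y^2 \Delta\widetilde{u}$, where $\widetilde{u}(x,y):=u(x,-y)$. A first application produces
\begin{equation*}
\Delta w = -3\,\Delta\widetilde{u} - 2y\,(\Delta\widetilde{u})_y + 4\,\widetilde{u}_{yy} - y^2\,\Delta^2\widetilde{u}.
\end{equation*}
Applying $\Delta$ once more and using $\Delta^2\widetilde{u}(x,y) = F(x,-y) =: \widetilde{F}(x,y)$, the contribution $4(\Delta\widetilde{u})_{yy}$ coming from $\Delta(4\widetilde{u}_{yy})$ cancels exactly the $-4(\Delta\widetilde{u})_{yy}$ arising from $\Delta(-2y(\Delta\widetilde{u})_y)$, leaving
\begin{equation*}
\Delta^2 w = -5\widetilde{F} - 6y\,\widetilde{F}_y - y^2\,\Delta\widetilde{F},
\end{equation*}
which, using $\widetilde{F}_y(x,y)=-F_y(x,-y)$, is precisely the $F_1$ in \eqref{eq:16.6}.

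The main obstacle I anticipate is twofold: keeping careful track of signs and coefficients in the iterated Laplacian computation, where the cancellation of the $(\Delta\widetilde{u})_{yy}$ terms is what delivers the clean coefficients $(-5,-6,-1)$ in $F_1$; and, independently, verifying the match of the third normal derivative on $\{y=0\}$, which is where the precise form of the reflection \eqref{eq:16.2} is essential (a naive odd or even extension would fail to produce an $H^4$ function, or would spoil the biharmonic identity). All remaining steps are routine bookkeeping.
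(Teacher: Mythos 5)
Your proof is correct. The computational core — the iterated Laplacian calculation showing $\Delta^2 w = -5\widetilde{F}-6y\widetilde{F}_y-y^2\Delta\widetilde{F}$, and the verification that $w,w_y$ vanish on $\{y=0\}$ while $w_{yy},w_{yyy}$ match $u_{yy},u_{yyy}$ there — is exactly the content of the paper's proof, and your trace identities are equivalent to the paper's, since with $w(x,0)=w_y(x,0)=0$ one has $\Delta w(x,0)=w_{yy}(x,0)$ and $(\Delta w)_y(x,0)=w_{yyy}(x,0)$.

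Where you genuinely differ is in the logical route to $\overline{u}\in H^4(B_1)$ and the pointwise equation. The paper uses the first two trace identities to get only $\overline{u}\in H^2(B_1)$, then proves $\Delta^2\overline{u}=\overline{F}$ weakly by integrating by parts against a test function (the matching of $\Delta w,\ (\Delta w)_y$ serving to cancel interface terms), and finally upgrades to $H^4$ via interior elliptic regularity, since $\overline F\in L^2$. You instead invoke the $H^4$ gluing lemma directly once all four normal-derivative traces are matched, then read off the pointwise equation from $\Delta^2 w=F_1$ on $B_1^-$ and $\Delta^2 u=F$ on $B_1^+$. Your route is a bit shorter and more elementary, at the cost of requiring $w\in H^4(B_1^-)$ a priori (which you correctly note needs $u\in H^6$, so it is available here) and a clean statement of the gluing lemma. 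The paper's route only needs $\overline u\in H^2$ before invoking regularity theory, so it is a touch more robust to lower regularity hypotheses, but otherwise the two arguments are interchangeable. One very minor slip: you claim $q_2(u)\in H^4$; with $c_\alpha$ only $C^2$ this should be $H^2$, but your final conclusion $F\in H^2$ is unaffected.
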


\begin{proof}
   Throughout this proof, we understand $(x,y)\in B_1^-$. It is easy to verify  that
\begin{equation}
    \label{eq:17.1}
    \Delta^2 w(x,y)= -[5F(x,-y)-6yF_y(x,-y)+y^2\Delta F(x,-y)]=F_1(x,y).
\end{equation}	
Moreover, by \eqref{eq:15.1b} and \eqref{eq:16.2},
\begin{equation}
    \label{eq:17.2}
    w(x,0)= -u(x,0) =0, \quad \forall x\in (-1,1).
\end{equation}	
By differentiating \eqref{eq:16.2} w.r.t. $y$, we have
\begin{equation}
    \label{eq:17.3bis}
    w_y(x,y)= -[u_y(x,-y)-2yu_{yy}(x,-y)+2y\Delta u(x,-y)-y^2(\Delta u_y)(x,-y)],
\end{equation}
so that, by \eqref{eq:15.1b},
\begin{equation}
    \label{eq:17.3}
    w_y(x,0)= -u_y(x,0) =0, \quad \forall x\in (-1,1).
\end{equation}
Moreover,
\begin{equation}
    \label{eq:17.6}
    \Delta w(x,y)= -[3 \Delta u(x,-y)-4u_{yy}(x,-y)-2y(\Delta u_y)(x,-y)+y^2(\Delta^2 u)(x,-y)],
\end{equation}
so that, recalling \eqref{eq:15.1b}, we have that, for every $x\in (-1,1)$,
\begin{multline}
    \label{eq:17.4}
    \Delta w(x,0)= -[3 \Delta u(x,0)-4u_{yy}(x,0)]= u_{yy}(x,0)
		= \Delta u (x,0).
\end{multline}
By differentiating \eqref{eq:17.6} w.r.t. $y$, we have
\begin{multline}
    \label{eq:18.1}
    (\Delta w_y)(x,y)= -[-5 (\Delta u_y)(x,-y)+4u_{yyy}(x,-y)+\\
		+
		2y(\Delta u_{yy})(x,-y)
		+2y(\Delta^2 u)(x,-y)-y^2(\Delta^2 u_y)(x,-y)],
\end{multline}
so that, taking into account \eqref{eq:15.1b}, it follows that, for every $x\in (-1,1)$,
\begin{multline}
    \label{eq:17.5}
    (\Delta w_y)(x,0)= -[-5 (\Delta u_y)(x,0)+4u_{yyy}(x,0)] =\\
		=-[-5 u_{yxx}(x,0)
		- u_{yyy}(x,0)] = u_{yyy}(x,0) = (\Delta u_y)(x,0).
\end{multline}
By \eqref{eq:17.2} and \eqref{eq:17.3}, we have that $\overline{u}\in H^2(B_1)$.
Let $\varphi\in C^\infty_0(B_1)$ be a test function. Then, integrating by parts and using \eqref{eq:17.1}, \eqref{eq:17.4}, \eqref{eq:17.5}, we have
\begin{multline}
    \label{eq:18.2}
    \int_{B_1}\Delta \overline{u} \Delta\varphi = \int_{B_1^+}\Delta u \Delta\varphi
		+\int_{B_1^-}\Delta w \Delta\varphi=\\
		=-\int_{-1 }^1 \Delta u(x,0)\varphi_y(x,0)+\int_{-1 }^1 (\Delta u_y)(x,0)\varphi(x,0)
		+\int_{B_1^+}(\Delta^2 u) \varphi +\\
		+\int_{-1 }^1 \Delta w(x,0)\varphi_y(x,0)-\int_{-1 }^1 (\Delta w_y)(x,0)\varphi(x,0)
		+\int_{B_1^-}(\Delta^2 w) \varphi=\\
		+\int_{B_1^+}F \varphi+\int_{B_1^-}F_1 \varphi
		=\int_{B_1}\overline{F} \varphi.
\end{multline}
Therefore
\begin{equation*}
    \int_{B_1}\Delta \overline{u} \Delta\varphi
		=\int_{B_1}\overline{F} \varphi, \quad \forall \varphi \in C^\infty_0(B_1),
\end{equation*}
so that \eqref{eq:16.4} holds and, by interior regularity esimates, $\overline{u}\in H^4(B_1)$.
\end{proof}

{}From now on, we shall denote by $P_k$, for $k\in \N$, $0\leq k\leq 3$, any differential operator of the form
\begin{equation*}
    \sum_{|\alpha|\leq k}c_\alpha(x)D^\alpha,
\end{equation*}
with $\|c_\alpha\|_{L^\infty}\leq cM_1$, where $c$ is an absolute constant.

\begin{prop}
    \label{prop:19.2}
		For every $(x,y)\in B_1^-$, we have
\begin{equation}
    \label{eq:19.1}
    F_1(x,y)= H(x,y)+(P_2(w))(x,y)+(P_3(u))(x,-y),
\end{equation}
where
\begin{multline}
    \label{eq:19.2}
    H(x,y)= 6\frac{a_1}{y}(w_{yx}(x,y)+u_{yx}(x,-y))+\\
		+6\frac{a_2}{y}(-w_{yy}(x,y)+u_{yy}(x,-y))
		-\frac{12a_2}{y}u_{xx}(x,-y),
\end{multline}
where $a_1,a_2$ are the components of the vector $a$.
Moreover, for every $x\in (-1,1)$,
\begin{equation}
    \label{eq:23.1}
    w_{yx}(x,0)+u_{yx}(x,0)=0,
\end{equation}
\begin{equation}
    \label{eq:23.2}
    -w_{yy}(x,0)+u_{yy}(x,0)=0,
\end{equation}
\begin{equation}
    \label{eq:23.3}
    u_{xx}(x,0)=0.
\end{equation}
\end{prop}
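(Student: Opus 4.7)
I plan to prove the identity (4.19.1)--(4.19.2) by an order-by-order expansion of $F_1(x,y)=-5F(x,-y)+6yF_y(x,-y)-y^2\Delta F(x,-y)$ with $F=a\cdot\nabla\Delta u+q_2(u)$, and then to check the boundary identities (4.23.1)--(4.23.3) separately using the explicit formula for $w$. The only $5$th-order term in the expansion is the piece $-y^2\,a(x,-y)\cdot\nabla(\Delta^2 u)(x,-y)$ sitting inside $-y^2\Delta(a\cdot\nabla\Delta u)(x,-y)$. Since $(x,-y)\in B_1^+$, the equation \eqref{eq:15.1a} gives $\Delta^2 u(x,-y)=F(x,-y)$, so that $\nabla\Delta^2 u(x,-y)=\nabla F(x,-y)$, which is of order $4$; after this substitution, every term in $F_1$ has order $\leq 4$ in the derivatives of $u$.

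The key tool to handle the remaining $4$th-order terms is the family of identities obtained by differentiating \eqref{eq:16.2}:
\begin{align*}
y^2(\Delta u)_{xx}(x,-y) &= -w_{xx}(x,y)-u_{xx}(x,-y)-2yu_{xxy}(x,-y),\\
y^2(\Delta u)_{xy}(x,-y) &= w_{yx}(x,y)+u_{yx}(x,-y)+2yu_{xxx}(x,-y),\\
y^2(\Delta u)_{yy}(x,-y) &= -w_{yy}(x,y)+u_{yy}(x,-y)-2u_{xx}(x,-y)+2yu_{xxy}(x,-y)+2y(\Delta u)_y(x,-y).
\end{align*}
Each $4$th-order derivative of $u$ at $(x,-y)$ appearing with a $y^2$ prefactor in $-y^2\Delta F(x,-y)$ is converted, through these relations, into a second-order derivative of $w$ at $(x,y)$ (a $P_2(w)(x,y)$ contribution) plus a $P_3(u)(x,-y)$ remainder. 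Thus $-y^2\Delta F(x,-y)$ falls entirely into $P_2(w)(x,y)+P_3(u)(x,-y)$. The only $4$th-order pieces left sit in $6yF_y(x,-y)$, namely $6y[a_1(\Delta u)_{xy}+a_2(\Delta u)_{yy}](x,-y)$. Writing $6y=\tfrac{6}{y}y^2$ and invoking the same relations produces
\[
6y\bigl[a_1(\Delta u)_{xy}+a_2(\Delta u)_{yy}\bigr](x,-y) = H(x,y)+P_3(u)(x,-y),
\]
where the summand $-\tfrac{12a_2}{y}u_{xx}(x,-y)$ in $H$ emerges precisely from the $-2u_{xx}(x,-y)$ term in the $(\Delta u)_{yy}$ relation. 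Since $-5F(x,-y)$ and the part of $6yF_y(x,-y)$ of order $\leq 3$ are already of type $P_3(u)(x,-y)$, summing yields \eqref{eq:19.1}.

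For the boundary identities, I would use the formulas obtained by differentiating \eqref{eq:16.2}:
\begin{align*}
w_{yx}(x,y) &= -u_{yx}(x,-y)-2yu_{xxx}(x,-y)+y^2(\Delta u)_{xy}(x,-y),\\
w_{yy}(x,y) &= u_{yy}(x,-y)-2u_{xx}(x,-y)+2yu_{xxy}(x,-y)+2y(\Delta u)_y(x,-y)-y^2(\Delta u)_{yy}(x,-y).
\end{align*}
At $y=0$, \eqref{eq:23.1} is automatic from the first formula; \eqref{eq:23.3} is obtained by differentiating the Dirichlet condition $u(x,0)=0$ twice in $x$; and \eqref{eq:23.2} then follows since $w_{yy}(x,0)=u_{yy}(x,0)-2u_{xx}(x,0)=u_{yy}(x,0)$. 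The main obstacle is organizational: $-y^2\Delta F$ generates many terms (from $\Delta(a\cdot\nabla\Delta u)$ with all derivatives of $a$, and from $\Delta(q_2(u))$), each weighted by $y^2$, and one must verify that the leftover $1/y$ pieces, after the systematic substitutions above, combine into exactly the prescribed form of $H$ — that is, with precisely the factors $6$, $-6$, and $-12$ in front of the components of $a$.
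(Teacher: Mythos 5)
Your proposal is correct and follows essentially the same route as the paper: reduce the fifth-order term in $-y^2\Delta F(x,-y)$ via the PDE $\Delta^2 u=F$, use the identity $y^2\Delta u(x,-y)=-w(x,y)-u(x,-y)-2yu_y(x,-y)$ and its first and second derivatives to absorb the $y^2$-weighted fourth-order pieces into $P_2(w)+P_3(u)$, and extract $H$ from the fourth-order part of $6yF_y(x,-y)$ by the same relations. The boundary identities are also proved identically, by evaluating the differentiated reflection formula at $y=0$ and using $u(x,0)=u_y(x,0)=0$.
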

\begin{proof}
As before,  we understand $(x,y)\in B_1^-$.
Recalling \eqref{eq:16.2} and \eqref{eq:16.3}, it is easy to verify that
\begin{equation}
    \label{eq:19.3}
    F(x,-y)= (P_3(u))(x,-y),
\end{equation}
\begin{equation}
    \label{eq:20.1}
    -6yF_y(x,-y)= -6y(a\cdot \nabla \Delta u_y)(x,-y)+(P_3(u))(x,-y).
\end{equation}
Next, let us prove that
\begin{equation}
    \label{eq:20.2}
    y^2\Delta F(x,-y)= (P_2(w))(x,y)+(P_3(u))(x,-y).
\end{equation}
By denoting for simplicity $\partial_1 =\frac{\partial}{\partial x}$,
$\partial_2 =\frac{\partial}{\partial y}$, we have that
\begin{multline}
    \label{eq:20.3}
    y^2\Delta F(x,-y)= y^2(a_j\partial_j\Delta^2 u + 2\nabla a_j\cdot \nabla \partial_j\Delta u + \Delta a_j\partial_j\Delta u)(x,-y)+y^2\Delta(q_2(u))(x,-y)=\\
		=y^2(a_j\partial_j(a\cdot \nabla \Delta u+q_2 (u))(x,-y)+
		2y^2(\nabla a_j\cdot\nabla\partial_j \Delta u)(x,-y)+\\
		+y^2(\Delta q_2(u))(x,-y)+y^2(P_3(u))(x,-y)=\\
		=y^2(a_j a\cdot \nabla \Delta \partial_j u)(x,-y)+\\
		+2y^2(\nabla a_j\cdot\nabla\partial_j \Delta u)(x,-y)
		+y^2\Delta(q_2(u))(x,-y)+y^2(P_3(u))(x,-y).
\end{multline}
By \eqref{eq:16.2}, we have
\begin{equation*}
    y^2\Delta u(x,-y)=-w(x,y)-u(x,-y)-2yu_y(x,-y),
\end{equation*}
obtaining
\begin{multline}
    \label{eq:21.1}
		y^2(a_j a\cdot \nabla \partial_j\Delta  u)(x,-y)=
		(a_j a\cdot \nabla \partial_j(y^2\Delta  u))(x,-y)+
		(P_3(u))(x,-y)=\\
		=(P_2(w))(x,y)+(P_3(u))(x,-y).
\end{multline}
Similarly, we can compute
\begin{equation}
    \label{eq:21.2}
		2y^2(\nabla a_j \cdot \nabla \partial_j\Delta  u)(x,-y)=
		(P_2(w))(x,y)+(P_3(u))(x,-y),
\end{equation}
\begin{equation}
    \label{eq:21.3}
		y^2(\Delta  q_2(u))(x,-y)=
		(P_2(w))(x,y)+(P_3(u))(x,-y).
\end{equation}
Therefore, \eqref{eq:20.2} follows {}from \eqref{eq:20.3}--\eqref{eq:21.3}.

{}From \eqref{eq:16.6}, \eqref{eq:19.3}--\eqref{eq:20.2}, we have
\begin{equation}
    \label{eq:21.4}
		F_1(x,y)=6y(a\cdot \nabla\Delta u_y)(x,-y)
		+(P_2(w))(x,y)+(P_3(u))(x,-y).
\end{equation}
We have that
\begin{equation}
    \label{eq:21.5}
		6y(a\cdot \nabla\Delta u_y)(x,-y)=
		6y(a_1\Delta u_{xy})(x,-y)+6y(a_2\Delta u_{yy})(x,-y).
\end{equation}
By \eqref{eq:16.2}, we have
\begin{equation}
    \label{eq:22.1}
		w_{yx}(x,y)=-u_{yx}(x,-y)+2yu_{yyx}(x,-y)-2y(\Delta u_{x})(x,-y)
		+y^2(\Delta u_{yx})(x,-y),
\end{equation}
so  that
\begin{equation}
    \label{eq:22.2}
		y(\Delta u_{yx})(x,-y)=\frac{1}{y}(w_{yx}(x,y)+u_{yx}(x,-y))+(P_3(u))(x,-y).
\end{equation}
Again by \eqref{eq:16.2}, we have
\begin{multline}
    \label{eq:22.3}
		w_{yy}(x,y)=\\
		=3u_{yy}(x,-y)-2(\Delta u)(x,-y)-2y((u_{yyy})(x,-y)+2\Delta u_y(x,-y))
		-y^2(\Delta u_{yy})(x,-y)=\\
		=u_{yy}(x,-y)-2u_{xx}(x,-y)-y^2(\Delta u_{yy})(x,-y)+y(P_3(u))(x,-y),
\end{multline}
so that
\begin{equation}
    \label{eq:22.4}
		y(\Delta u_{yy})(x,-y)=\frac{1}{y}(-w_{yy}(x,y)+u_{yy}(x,-y)-2u_{xx}(x,-y))+(P_3(u))(x,-y).
\end{equation}
Therefore \eqref{eq:19.1}--\eqref{eq:19.2} follow by \eqref{eq:21.4}, \eqref{eq:21.5}, \eqref{eq:22.2} and \eqref{eq:22.4}.

The identity \eqref{eq:23.1} is an immediate consequence of \eqref{eq:22.1} and \eqref{eq:15.1b}.

By \eqref{eq:15.1b}, we have \eqref{eq:23.3} and
by \eqref{eq:22.3} and \eqref{eq:23.3},

\begin{equation*}
		-w_{yy}(x,0)+ u_{yy}(x,0) =2 u_{xx}(x,0) =0.
\end{equation*}

\end{proof}

For the proof of the three spheres inequality at the boundary we shall use the following Hardy's inequality (\cite[\S 7.3, p. 175]{l:HLP34}), for a proof see also \cite{l:T67}.

\begin{prop} [{\bf Hardy's inequality}]
    \label{prop:Hardy}
		Let $f$ be an absolutely continuous function defined in $[0,+\infty)$, such that
		$f(0)=0$. Then
\begin{equation}
    \label{eq:24.1}
		\int_1^{+\infty} \frac{f^2(t)}{t^2}dt\leq 4 \int_1^{+\infty} (f'(t))^2dt.
\end{equation}
\end{prop}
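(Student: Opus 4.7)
The plan is the classical Hardy integration-by-parts argument, applied directly on $(1,+\infty)$. Before starting I flag a potential issue with the statement as literally written: the function $f(t)=\min(t,1)$ is absolutely continuous on $[0,\infty)$ with $f(0)=0$, yet $\int_1^\infty f^2/t^2=1$ while $\int_1^\infty(f')^2=0$, so \eqref{eq:24.1} cannot hold under $f(0)=0$ alone. The cited reference \cite[\S 7.3, p.~175]{l:HLP34} is the canonical Hardy inequality with both integrals over $(0,+\infty)$, and I would read Proposition \ref{prop:Hardy} either in that canonical form or under the implicit additional hypothesis $f(1)=0$ that makes the proof below close with the stated constant $4$ exactly. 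The IBP-plus-Cauchy--Schwarz skeleton is uniform across both readings; the only difference is which boundary term vanishes.

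For $0<\varepsilon<T<\infty$, writing $1/t^2=-(1/t)'$ and integrating by parts gives
\[
\int_\varepsilon^T \frac{f^2(t)}{t^2}\,dt \;=\; \frac{f^2(\varepsilon)}{\varepsilon}\;-\;\frac{f^2(T)}{T}\;+\;2\int_\varepsilon^T \frac{f(t)\,f'(t)}{t}\,dt,
\]
and Cauchy--Schwarz on the cross-term yields $2\int_\varepsilon^T ff'/t\,dt \le 2\sqrt{A_{\varepsilon,T}\,B_{\varepsilon,T}}$ with $A_{\varepsilon,T}=\int_\varepsilon^T f^2/t^2$ and $B_{\varepsilon,T}=\int_\varepsilon^T (f')^2$. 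The lower boundary term $f^2(\varepsilon)/\varepsilon$ vanishes at $\varepsilon=1$ if one assumes $f(1)=0$, and it vanishes as $\varepsilon\to 0^+$ under the stated hypothesis $f(0)=0$ via the elementary bound $f^2(\varepsilon)=(\int_0^\varepsilon f')^2\le \varepsilon\int_0^\varepsilon(f')^2$. The upper boundary term $f^2(T)/T$ is nonpositive in the identity above (it appears with a minus sign) and may simply be dropped, giving the inequality direction we need.

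Combining these pieces, dropping $-f^2(T)/T$ and letting $\varepsilon$ tend either to $0^+$ (under $f(0)=0$) or be $\varepsilon=1$ (under $f(1)=0$), and then sending $T\to\infty$ by monotone convergence, one arrives at $A\le 2\sqrt{A\,B}$, where $A$ and $B$ are the two sides of \eqref{eq:24.1} (suitably interpreted). The elementary implication $X\le 2\sqrt{XY}\Rightarrow X\le 4Y$ (dividing by $\sqrt{X}$ when $X>0$ and squaring) then yields $A\le 4B$, the claimed constant.

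The main obstacle I foresee is the limit at $T=\infty$: a priori we do not know that $A<\infty$, so the division by $\sqrt{A}$ is suspect. I would circumvent this by keeping the argument on the truncated interval, where $A_{\varepsilon,T}\le \eta_\varepsilon+2\sqrt{A_{\varepsilon,T}\,B}$ with $\eta_\varepsilon\to 0$; solving this quadratic in $\sqrt{A_{\varepsilon,T}}$ produces a $T$-uniform bound $\sqrt{A_{\varepsilon,T}}\le \sqrt{B}+\sqrt{B+\eta_\varepsilon}$, hence $A_{\varepsilon,T}\le 4B+o(1)$ independently of $T$, after which monotone convergence in $T$ and in $\varepsilon$ delivers the desired inequality and removes the only piece of circular reasoning.
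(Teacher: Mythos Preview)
The paper does not supply its own proof of Proposition~\ref{prop:Hardy}; it simply cites \cite[\S 7.3, p.~175]{l:HLP34} and \cite{l:T67}. Your integration-by-parts plus Cauchy--Schwarz argument is exactly the classical proof found in those references, and your care with the truncation (solving the quadratic in $\sqrt{A_{\varepsilon,T}}$ to avoid assuming $A<\infty$ in advance) is correct and complete.

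Your observation about the statement is also on point: the counterexample $f(t)=\min(t,1)$ shows that \eqref{eq:24.1} is false under the sole hypothesis $f(0)=0$ when both integrals run over $(1,\infty)$. This is a misprint in the paper. If you inspect the single application in the proof of Theorem~\ref{theo:40.prop3} (estimate \eqref{eq:32.2}), the inequality actually invoked is
\[
\int_{-\infty}^{0}\Bigl|\,y^{-1}g(y)\Bigr|^{2}\,dy\;\le\;4\int_{-\infty}^{0}\bigl|\partial_y g(y)\bigr|^{2}\,dy,
\]
for a function $g$ satisfying $g(0)=0$ (this vanishing is precisely the content of \eqref{eq:23.1}--\eqref{eq:23.3}) and with compact support in $y$. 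After the substitution $t=-y$ this is the canonical Hardy inequality on $(0,\infty)$ with $f(0)=0$, i.e.\ your reading (a). Your proof covers this version: the boundary term $f^2(\varepsilon)/\varepsilon\to 0$ via $f^2(\varepsilon)\le\varepsilon\int_0^\varepsilon(f')^2$, and the rest goes through verbatim.
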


Another basic result we need to derive the three spheres inequality at the boundary is the following Carleman estimate, which was obtained in \cite[Theorem 6.8]{l:mrv07}.

\begin{prop} [{\bf Carleman estimate}]
    \label{prop:Carleman}
		Let $\epsilon\in(0,1)$. Let us define
\begin{equation}
    \label{eq:24.2}
		\rho(x,y) = \varphi\left(\sqrt{x^2+y^2}\right),
\end{equation}
where
\begin{equation}
    \label{eq:24.3}
		\varphi(s) = s\exp\left(-\int_0^s \frac{dt}{t^{1-\epsilon}(1+t^\epsilon)}\right).
\end{equation}
Then there exist $\overline{\tau}>1$, $C>1$, $\widetilde{R}_0\leq 1$, only depending on $\epsilon$, such that
\begin{equation}
    \label{eq:24.4}
	\sum_{k=0}^3 \tau^{6-2k}\int\rho^{2k+\epsilon-2-2\tau}|D^kU|^2dxdy\leq C
	\int\rho^{6-\epsilon-2\tau}(\Delta^2 U)^2dxdy,
\end{equation}
for every $\tau\geq \overline{\tau}$ and for every $U\in C^\infty_0(B_{\widetilde{R}_0}\setminus\{0\})$.
\end{prop}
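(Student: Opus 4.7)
The strategy I would pursue is to reduce the fourth‐order Carleman inequality to an iteration of a corresponding second‐order estimate for $\Delta$ with the \emph{same} weight $\rho$. The preliminary goal is therefore
\begin{equation*}
\tau^3 \int \rho^{\epsilon-2-2\tau} W^2 \,dxdy + \tau \int \rho^{\epsilon-2\tau} |DW|^2 \,dxdy \leq C \int \rho^{2-\epsilon-2\tau} (\Delta W)^2 \,dxdy \quad (\ast)
\end{equation*}
for $W\in C^\infty_0(B_{\widetilde{R}_0}\setminus\{0\})$, $\tau\geq\overline\tau$. To prove $(\ast)$ I would pass to polar coordinates $(r,\theta)$, write $\Delta=\partial_r^2+r^{-1}\partial_r+r^{-2}\partial_\theta^2$, and introduce the logarithmic radial variable $t=-\log\varphi(r)$, under which $\rho=e^{-t}$ so the weight becomes $e^{\tau t}$. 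The defining ODE of $\varphi$, which gives $\varphi'/\varphi=1/(s(1+s^\epsilon))$, is designed precisely so that, after conjugation by $e^{\tau t}$, the Laplace operator acquires a principal symbol whose real and imaginary parts have a positive commutator at $\tau\to\infty$. Decomposing $W$ in Fourier series in $\theta$ reduces matters to a one–dimensional Carleman inequality for each angular mode, which is obtained by splitting the conjugated operator into its symmetric and antisymmetric parts and integrating by parts; summing over modes and reverting to $(x,y)$ gives $(\ast)$.

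Having $(\ast)$ in hand, I would iterate it. Applying $(\ast)$ with $W=\Delta U$ yields
\begin{equation*}
\tau^3 \int \rho^{\epsilon-2-2\tau}(\Delta U)^2 + \tau \int \rho^{\epsilon-2\tau}|D\Delta U|^2 \leq C \int \rho^{2-\epsilon-2\tau}(\Delta^2 U)^2.
\end{equation*}
Applying $(\ast)$ with $W=U$ and using the pointwise bound $\rho^{2-\epsilon-2\tau}\leq \rho^{\epsilon-2-2\tau}$ (which holds because $\rho\leq 1$ on the support of $U$ and $4-2\epsilon>0$) to feed the previous inequality into the right‐hand side of the second application, I obtain the $k=0$ and $k=1$ terms of the target after multiplying by $\tau^3$. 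The remaining $k=2$ and $k=3$ terms are recovered from weighted Caccioppoli–type interior elliptic regularity estimates, controlling $\int \rho^{2+\epsilon-2\tau}|D^2U|^2$ and $\int \rho^{4+\epsilon-2\tau}|D^3U|^2$ by $\int \rho^{\epsilon-2-2\tau}(\Delta U)^2$ and $\int \rho^{\epsilon-2\tau}|D\Delta U|^2$ respectively (up to lower order terms that can be absorbed thanks to the powers of $\tau$), with the correct factors $\tau^2$ and $1$ in front.

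The main obstacle is the proof of $(\ast)$. The bi‐Laplacian has double characteristics, and for the unperturbed weight $\rho=r$ the standard Hörmander pseudo‐convexity condition fails at the boundary of the level sets through the origin; it is precisely the $\epsilon$-correction built into $\varphi$ that produces the extra positive term needed to close the commutator estimate. The delicate technical point is to carry out the one–dimensional estimates \emph{uniformly in the Fourier index} $n$, handling the competition between the large factor $n^2$ arising from $r^{-2}\partial_\theta^2$ and the radial terms, while simultaneously keeping track of the sharp powers of $\tau$ so that the iteration in Step~2 delivers exactly $\tau^{6-2k}$ on the left‐hand side.
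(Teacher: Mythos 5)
The paper does not actually prove Proposition~\ref{prop:Carleman}; it cites it verbatim from \cite[Theorem 6.8]{l:mrv07}. So what has to be checked is whether your reconstruction is sound on its own terms, and there is a genuine gap in the iteration step.

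Carry your chaining through explicitly. Application of $(\ast)$ to $W=U$ gives
\begin{equation*}
\tau^3\int\rho^{\epsilon-2-2\tau}U^2+\tau\int\rho^{\epsilon-2\tau}|DU|^2\leq C\int\rho^{2-\epsilon-2\tau}(\Delta U)^2 .
\end{equation*}
You then bound $\rho^{2-\epsilon-2\tau}\leq\rho^{\epsilon-2-2\tau}$ and invoke $(\ast)$ applied to $W=\Delta U$, which controls $\tau^3\int\rho^{\epsilon-2-2\tau}(\Delta U)^2$ by $C\int\rho^{2-\epsilon-2\tau}(\Delta^2U)^2$. After multiplying by $\tau^3$ you arrive at
\begin{equation*}
\tau^6\int\rho^{\epsilon-2-2\tau}U^2+\tau^4\int\rho^{\epsilon-2\tau}|DU|^2\leq C\int\rho^{2-\epsilon-2\tau}(\Delta^2U)^2 .
\end{equation*}
The right-hand side carries the weight $\rho^{2-\epsilon-2\tau}$, not $\rho^{6-\epsilon-2\tau}$ as in \eqref{eq:24.4}. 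Since $\rho\leq\widetilde R_0\leq1$ and $2-\epsilon-2\tau<6-\epsilon-2\tau$, one has $\rho^{2-\epsilon-2\tau}\geq\rho^{6-\epsilon-2\tau}$ pointwise, so what you obtain is strictly \emph{weaker} than the asserted inequality and does not imply it. This is not a cosmetic discrepancy: in the paper's use of the estimate (see \eqref{eq:29.1}) one needs to absorb terms with a coefficient $1-CM_1^2\rho^{8-2\epsilon-2k}$ for $k\leq3$, which is only possible because $8-2\epsilon-2k>0$; with the exponent $2-\epsilon$ in place of $6-\epsilon$ that absorption fails for $k=2,3$.

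The exponent mismatch is intrinsic to the bookkeeping, not to the crude step $\rho^{2-\epsilon-2\tau}\leq\rho^{\epsilon-2-2\tau}$: even if you shift the large parameter in the second application (take $\tau''=\tau-2+\epsilon$ so that the exponent $\epsilon-2-2\tau''$ lines up exactly with $2-\epsilon-2\tau$), the resulting right-hand exponent is $2-\epsilon-2\tau''=6-3\epsilon-2\tau$, still strictly less than $6-\epsilon-2\tau$. The root cause is that your second-order estimate $(\ast)$ has offset $4-2\epsilon$ between the weight for $W^2$ and the weight for $(\Delta W)^2$, so two exact compositions can produce at most an offset $8-4\epsilon$, while the target \eqref{eq:24.4} has offset $8-2\epsilon$. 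To make the iteration close, you would need a second-order Carleman inequality for $\Delta$ with the same weight but offset $4-\epsilon$ (i.e., half the $\epsilon$-gain of $(\ast)$), and you would then need to verify that such an estimate actually holds for the $\varphi$ of \eqref{eq:24.3}. As written, $(\ast)$ plus your chaining does not reach \eqref{eq:24.4}.

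A secondary caveat: the Caccioppoli-type recovery of the $k=2,3$ terms is plausible, but it must be carried out with constants uniform in $\tau$, keeping track of the commutators produced when differentiating $\rho^{\cdot-2\tau}$, which contribute factors of $\tau\rho^{-1}$; that bookkeeping is exactly what determines whether the prefactors $\tau^2$ and $\tau^0$ come out correctly, and it is not automatic.
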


\begin{rem}
   \label{rem:stima_rho}
	Let us notice that
	\begin{equation*}
e^{-\frac{1}{\epsilon}}s\leq \varphi(s)\leq s,
\end{equation*}
\begin{equation}
    \label{eq:stima_rho}
	e^{-\frac{1}{\epsilon}}\sqrt{x^2+y^2}\leq \rho(x,y)\leq \sqrt{x^2+y^2}.
\end{equation}	
	
\end{rem}

We shall need also the following interpolation estimates.

\begin{lem}
    \label{lem:Agmon}
Let $0<\epsilon\leq 1$ and $m\in \N$, $m\geq 2$. There exists an absolute constant
$C_{m,j}$ such that for every $v\in H^m(B_r^+)$,
\begin{equation}
    \label{eq:3a.2}
	r^j\|D^jv\|_{L^2(B_r^+)}\leq C_{m,j}\left(\epsilon r^m\|D^mv\|_{L^2(B_r^+)}
	+\epsilon^{-\frac{j}{m-j}}\|v\|_{L^2(B_r^+)}\right).
\end{equation}	
\end{lem}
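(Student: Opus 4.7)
The strategy is the standard two-step one: first prove a scale-invariant interpolation inequality on the fixed reference hemidisc $B_1^+$, then rescale to obtain the $r$-dependent statement on $B_r^+$.

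\textbf{Step 1 (reference domain).} On the Lipschitz domain $B_1^+$, I would invoke the classical Gagliardo--Nirenberg interpolation for Sobolev spaces: for every integer $0\leq j\leq m$ and every $v\in H^m(B_1^+)$ one has
\begin{equation*}
\|D^j v\|_{L^2(B_1^+)}\leq C_{m,j}\,\|D^m v\|_{L^2(B_1^+)}^{j/m}\,\|v\|_{L^2(B_1^+)}^{1-j/m},
\end{equation*}
with $C_{m,j}$ depending only on $m,j$ (and on the shape of $B_1^+$, which is fixed). This is standard for Lipschitz domains and may be taken as a black box.

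\textbf{Step 2 (Young's inequality).} I would now absorb the multiplicative form into an additive one via Young's inequality $ab\leq p^{-1}a^p+q^{-1}b^q$ with conjugate exponents $p=m/j$, $q=m/(m-j)$. Applied to $a=\eta\,\|D^m v\|_{L^2(B_1^+)}^{j/m}$ and $b=\eta^{-1}\|v\|_{L^2(B_1^+)}^{(m-j)/m}$ and then choosing the auxiliary parameter $\eta=\epsilon^{j/m}$, one gets $\eta^{m/j}=\epsilon$ and $\eta^{-m/(m-j)}=\epsilon^{-j/(m-j)}$, so the estimate on $B_1^+$ reads
\begin{equation*}
\|D^j v\|_{L^2(B_1^+)}\leq C_{m,j}\!\left(\epsilon\,\|D^m v\|_{L^2(B_1^+)}+\epsilon^{-\tfrac{j}{m-j}}\,\|v\|_{L^2(B_1^+)}\right),
\end{equation*}
for all $0<\epsilon\leq 1$ (the constant $C_{m,j}$ being again absolute, up to a harmless absorption of the factors $j/m$ and $(m-j)/m$).

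\textbf{Step 3 (rescaling).} Given $v\in H^m(B_r^+)$, set $\widetilde v(x)=v(rx)$ for $x\in B_1^+$. A change of variables gives $\|D^k\widetilde v\|_{L^2(B_1^+)}=r^{k-1}\|D^k v\|_{L^2(B_r^+)}$ for every $k\geq 0$. Inserting this for $k=0,j,m$ into the estimate from Step 2 applied to $\widetilde v$ and then multiplying through by $r$ yields exactly
\begin{equation*}
r^j\|D^j v\|_{L^2(B_r^+)}\leq C_{m,j}\!\left(\epsilon\,r^m\|D^m v\|_{L^2(B_r^+)}+\epsilon^{-\tfrac{j}{m-j}}\|v\|_{L^2(B_r^+)}\right),
\end{equation*}
which is the desired inequality.

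\textbf{Where the work lies.} The only nontrivial ingredient is the multiplicative interpolation in Step 1, and it is classical on Lipschitz domains; Steps 2 and 3 are purely mechanical (Young's inequality and a dilation argument). The degenerate endpoints $j=0$ and $j=m$ are either trivial or vacuous, so I would restrict attention to $0<j<m$, which is precisely the range in which $\epsilon^{-j/(m-j)}$ makes sense.
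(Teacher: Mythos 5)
The paper offers no proof of this lemma at all; it simply cites Agmon's book (\cite[Theorem 3.3]{l:a65}), so there is no in-paper argument to compare against. Your proposed route (multiplicative interpolation $\to$ Young's inequality $\to$ dilation) is the natural one, and Steps 2 and 3 are correct as written, but Step 1 contains a genuine error.

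The ``pure'' multiplicative Gagliardo--Nirenberg inequality
\begin{equation*}
\|D^j v\|_{L^2(B_1^+)}\leq C\,\|D^m v\|_{L^2(B_1^+)}^{j/m}\,\|v\|_{L^2(B_1^+)}^{1-j/m}
\end{equation*}
is \emph{false} on a bounded domain. Take $m=2$, $j=1$ and $v(x_1,x_2)=x_1$ on $B_1^+$: then $D^2v\equiv 0$, so the right-hand side vanishes, while $\|Dv\|_{L^2(B_1^+)}=|B_1^+|^{1/2}>0$. More generally, any nonzero polynomial of degree $d$ with $j\leq d<m$ is a counterexample. The pure product form holds for compactly supported functions on $\mathbb{R}^n$, or under a normalization that kills low-degree polynomials, but not on $B_1^+$. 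On a bounded Lipschitz domain the correct statement carries an additive lower-order term,
\begin{equation*}
\|D^j v\|_{L^2(B_1^+)}\leq C\Bigl(\|D^m v\|_{L^2(B_1^+)}^{j/m}\,\|v\|_{L^2(B_1^+)}^{1-j/m}+\|v\|_{L^2(B_1^+)}\Bigr),
\end{equation*}
and it is precisely that extra $\|v\|$ which makes the statement consistent with polynomial counterexamples. Fortunately this is the only thing missing: feeding the corrected inequality into your Step 2 still gives the additive bound with $\epsilon\|D^m v\|+\epsilon^{-j/(m-j)}\|v\|$, because the constraint $\epsilon\leq 1$ guarantees $\epsilon^{-j/(m-j)}\geq 1$ and the stray $\|v\|$ can be absorbed. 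Alternatively you could bypass Gagliardo--Nirenberg altogether and prove the additive form directly by an Ehrling-type compactness argument, which is what Agmon's Theorem 3.3 effectively encodes. So the architecture of your proof is sound, but as written Step 1 asserts a false inequality and must be repaired by adding the lower-order term (or by quoting the correct bounded-domain version).
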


See for instance \cite[Theorem 3.3]{l:a65}.

\begin{lem}
    \label{lem:intermezzo}
		Let $u\in H^6(B_1^+)$ be a solution to \eqref{eq:15.1a}--\eqref{eq:15.1b}, with $a$ and $q_2$ satisfying
		\eqref{eq:15.2_bis}. For every $r$, $0<r<1$, we have
		\begin{equation}
    \label{eq:12a.2}
	\|D^hu\|_{L^2(B_{\frac{r}{2}}^+)}\leq \frac{C}{r^h}\|u\|_{L^2(B_r^+)}, \quad \forall
	h=1, ..., 6,
\end{equation}	
where $C$ is a constant only depending on $\alpha_0$, $\gamma_0$ and $\Lambda_0$.
\end{lem}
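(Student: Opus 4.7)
The plan is to prove Lemma \ref{lem:intermezzo} as an iterated Caccioppoli-type estimate at the boundary for the fourth-order operator, combined with the Agmon interpolation inequality of Lemma \ref{lem:Agmon}. Since the Dirichlet conditions \eqref{eq:15.1b} are homogeneous on the flat portion of the boundary, all boundary integrals arising from integration by parts against a cutoff function supported away from the curved part of $\partial B_r^+$ will vanish, and one can mimic the standard interior argument.

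First I would establish the base case $h=4$. Fix $r/2\le \rho<\rho'\le r$ and choose a cutoff $\eta\in C^\infty_0(B_{\rho'})$ with $\eta\equiv 1$ on $B_\rho$ and $|D^k\eta|\le C(\rho'-\rho)^{-k}$ for $k\le 4$. Multiply \eqref{eq:15.1a} by $\eta^8 u$ and integrate over $B_{\rho'}^+$. Integrating the $\Delta^2 u$ term by parts twice produces $\int\eta^8(\Delta u)^2$ on the left, plus terms in which at most three derivatives fall on $u$ and the remaining derivatives on $\eta$; the boundary contributions along $\{y=0\}$ all carry a factor of $u$ or $u_y$ and vanish by \eqref{eq:15.1b}, while the boundary contributions along the curved part vanish because $\eta$ is compactly supported in $B_{\rho'}$. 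On the right, one estimates
\[
\Bigl|\int \eta^8 u\,(a\cdot\nabla\Delta u)\Bigr| + \Bigl|\int \eta^8 u\, q_2(u)\Bigr| \le CM_1\bigl(\|D^3u\|+\|D^2u\|+\|Du\|+\|u\|\bigr)_{L^2(B_{\rho'}^+)}\|u\|_{L^2(B_{\rho'}^+)},
\]
after further integration by parts to shed a derivative from $\nabla\Delta u$ onto $\eta^8 u$. Combining these bounds with the boundary elliptic regularity estimate $\|D^2 u\|_{L^2(B_\rho^+)}\le C(\|\Delta u\|_{L^2(B_{\rho'}^+)}+(\rho'-\rho)^{-1}\|Du\|_{L^2(B_{\rho'}^+)})$ valid under \eqref{eq:15.1b}, and using Lemma \ref{lem:Agmon} to absorb the intermediate $D^2,D^3$ terms via Young's inequality, yields a Caccioppoli estimate of the form
\[
\|D^4 u\|_{L^2(B_\rho^+)}\le \frac{C}{(\rho'-\rho)^4}\|u\|_{L^2(B_{\rho'}^+)}.
\]

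Next, for the orders $h=5,6$ I would differentiate \eqref{eq:15.1a} tangentially. Applying $\partial_x$ or $\partial_x^2$ to both \eqref{eq:15.1a} and \eqref{eq:15.1b} preserves the homogeneous Dirichlet structure on $\{y=0\}$, and the coefficients $a,c_\alpha$ are in $C^3$, $C^2$ respectively by \eqref{eq:15.2_bis}, so $\partial_x^j u$ solves an equation of the same type with similar bounds for $j=1,2$. Repeating the $h=4$ Caccioppoli argument for $\partial_x u$ and $\partial_x^2 u$ yields control of $\|\partial_x^j D^4 u\|_{L^2}$, i.e. of all mixed derivatives of order $4+j$ containing at least $j$ tangential derivatives. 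Pure normal derivatives $\partial_y^{4+j}u$ are recovered by solving \eqref{eq:15.1a} for $\partial_y^4 u$, differentiating in $y$, and substituting, which trades each extra $\partial_y$ for tangential derivatives plus strictly lower order terms already under control. Iterating this on a dyadic chain of nested half-balls $r/2=\rho_0<\rho_1<\cdots<\rho_N=r$ with $\rho_{k+1}-\rho_k\sim r/N$ concludes the estimate for $h=5,6$.

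Finally, the intermediate orders $h=1,2,3$ follow immediately by applying Lemma \ref{lem:Agmon} with $m=6$ on $B_{r/2}^+$, using the bound for $h=6$ just obtained together with $\|u\|_{L^2(B_{r/2}^+)}\le \|u\|_{L^2(B_r^+)}$ and choosing $\epsilon$ as a fixed small absolute constant so that $\epsilon r^6 \|D^6 u\|_{L^2}$ and $\epsilon^{-j/(6-j)}\|u\|_{L^2}$ are both $O(r^{-h}\|u\|_{L^2(B_r^+)})$. The main obstacle is the bookkeeping in the base Caccioppoli estimate: the first-order term $a\cdot\nabla\Delta u$ contains a third derivative, so one must redistribute derivatives through several integrations by parts (accounting for the $C^3$ regularity of $a$) before absorbing the remainder by Young's inequality and Lemma \ref{lem:Agmon}, while simultaneously checking that no nonzero trace along $\{y=0\}$ is produced.
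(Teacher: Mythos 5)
Your overall strategy (localize with a cutoff vanishing near the curved boundary, exploit the homogeneous Dirichlet conditions to kill traces on $\{y=0\}$, iterate over nested half-discs, and interpolate the intermediate orders via Lemma~\ref{lem:Agmon}) is in the right spirit and close to what the paper does. However, there is a genuine gap in the ``base case $h=4$''. Multiplying \eqref{eq:15.1a} by $\eta^8 u$ and integrating by parts twice yields, as you say, a Caccioppoli-type bound of the form
\[
\int \eta^8(\Delta u)^2 \;\lesssim\; \hbox{(cross terms involving $u,\nabla u,\nabla\eta$)},
\]
and after absorbing the right-hand side this controls $\|\Delta u\|_{L^2(B_\rho^+)}$, hence $\|D^2 u\|_{L^2(B_\rho^+)}$ via second-order boundary regularity with the Dirichlet condition $u=0$. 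This is an $H^2$ bound, with scaling $(\rho'-\rho)^{-2}$. It does not give $\|D^4u\|_{L^2(B_\rho^+)}\le C(\rho'-\rho)^{-4}\|u\|_{L^2(B_{\rho'}^+)}$: the energy identity only sees $\Delta u$, and the step from $\Delta u$ in $L^2$ to $u$ in $H^4$ for the clamped boundary conditions $u=u_y=0$ is precisely the nontrivial fourth-order boundary regularity theorem. You cannot decouple $\Delta^2u=F$, $u=u_y=0$ into two second-order Dirichlet problems, so no amount of Young/interpolation on the right-hand side will upgrade the order of what the left-hand side controls.

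The paper sidesteps this altogether by invoking the $H^{4+k}$ a priori boundary estimate for the fourth-order Dirichlet problem (Agmon, \emph{Lectures on elliptic boundary value problems}, cited as \cite[Theorem~9.8]{l:a65}) applied to $u\xi$; it then iterates over a continuum of radii via the quantities $\Phi_j=\sup_\sigma(1-\sigma)^j\|D^j u\|_{L^2(B_\sigma^+)}$ and uses Lemma~\ref{lem:Agmon} to close, finishing by scaling. Your tangential-differentiation idea (which you invoke only for $h=5,6$) is indeed how one could recover the $H^4$ bound from the $H^2$ bound by energy methods alone, but it needs to be carried out already to justify $h=3,4$: apply the Caccioppoli estimate to $\partial_x u$ and $\partial_x^2 u$ (both of which satisfy the clamped conditions on $\{y=0\}$ and an equation of the same structure since $a\in C^3$, $c_\alpha\in C^2$), then recover the missing pure-normal derivatives $\partial_y^3u,\partial_y^4u$ from $\Delta u = u_{xx}+u_{yy}$ and from the equation itself. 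As written, the proposal asserts the $h=4$ Caccioppoli outcome without this bridge, so the proof does not go through; either supply that bootstrapping step explicitly, or quote the fourth-order boundary regularity estimate directly as the paper does.
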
	
The proof of the above result is postponed to the Appendix.
		
\section{Three spheres inequality at the boundary and proof of the main theorem} \label{sec:3sfere}

\begin{theo} [{\bf Optimal three spheres inequality at the boundary - flat boundary case}]
    \label{theo:40.prop3}
		Let $u\in H^6(B_1^+)$ be a solution to \eqref{eq:15.1a}--\eqref{eq:15.1b}, with $a$ and $q_2$ satisfying
		\eqref{eq:15.2_bis}. Then there exist $\gamma\in (0,1)$, only depending on $M_1$ and an absolute constant $C>0$ such that, for every $r<R<\frac{R_0}{2}<R_0<\gamma$,
\begin{equation}
    \label{eq:40.1}
R^{2\epsilon}\int_{B_R^+}u^2\leq C(M_1^2+1)\left(\frac{R_0/2}{R}\right)^C\left(\int_{B_r^+}u^2\right)^{\widetilde{\theta}}\left(\int_{B_{R_0}^+}u^2\right)^{1-\widetilde{\theta}},
\end{equation}	
where
\begin{equation}
    \label{eq:39.1}
\widetilde{\theta} = \frac{\log\left(\frac{R_0/2}{R}\right)}{\log\left(\frac{R_0/2}{r/4}\right)}.
\end{equation}	
		
\end{theo}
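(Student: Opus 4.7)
The plan is a classical Carleman-plus-cutoff three-spheres argument, carried out not on $u$ itself but on the reflected function $\overline{u}$ from Proposition \ref{prop:16.1}, with Hardy's inequality (Proposition \ref{prop:Hardy}) used to tame the singular $1/y$ contributions that the reflection introduces into the bilaplacian. First I would fix a radial cutoff $\xi\in C_0^\infty(B_{R_0}\setminus\overline{B}_{r/4})$ equal to $1$ on $B_{R_0/2}\setminus\overline{B}_{r/2}$, satisfying $|D^k\xi|\leq Cr^{-k}$ on the inner annulus $A_r=\{r/4\leq|x|\leq r/2\}$ and $|D^k\xi|\leq CR_0^{-k}$ on the outer annulus $A_{R_0}=\{R_0/2\leq|x|\leq R_0\}$, for $0\leq k\leq 4$. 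Applying Proposition \ref{prop:Carleman} to $U=\xi\overline{u}$ (after approximating $\overline{u}\in H^4(B_1)$ by smooth functions) and writing $\Delta^2(\xi\overline{u})=\xi\Delta^2\overline{u}+[\Delta^2,\xi]\overline{u}$, the commutator is an operator of order $\leq 3$ in $\overline{u}$ supported on $A_r\cup A_{R_0}$; it will furnish the right-hand side of the final inequality. The central task is to absorb the principal piece $\xi\Delta^2\overline{u}=\xi\overline{F}$ into the Carleman left-hand side for $\tau$ large.

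On $B_1^+$, $\overline{F}=a\cdot\nabla\Delta u+q_2(u)$, so by \eqref{eq:15.2_bis}, $|\overline{F}|^2\leq CM_1^2\sum_{j\leq 3}|D^j u|^2$ and each summand is absorbed into the $j$-th level of the Carleman estimate provided $R_0\leq\gamma$ is small (the comparison $\rho^{6-\epsilon-2\tau}\leq\tau^{6-2j}\rho^{2j+\epsilon-2-2\tau}$ on $\mathrm{supp}\,\xi$ follows from $\rho\leq R_0$). On $B_1^-$, Proposition \ref{prop:19.2} decomposes $\overline{F}=F_1=H+P_2(w)+P_3(u)(x,-y)$: the $P_2(w)$-piece yields $\sum_{j\leq 2}|D^j\overline{u}|^2$ and is absorbed into the $j\leq 2$ terms, while the $P_3(u)(x,-y)$-piece, after the substitution $y\to -y$ (using $\rho(x,-y)=\rho(x,y)$), reduces to an integral of $\sum_{j\leq 3}|D^j u|^2$ on $B_1^+$ absorbed into the $j=3$ term.

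The principal obstacle is the singular term $H$ of \eqref{eq:19.2}, a sum of terms of the form $(a_i/y)f_k$ with $f_1=w_{yx}(x,y)+u_{yx}(x,-y)$, $f_2=-w_{yy}(x,y)+u_{yy}(x,-y)$ and $f_3=u_{xx}(x,-y)$; each vanishes at $y=0$ by Proposition \ref{prop:19.2}. Along every vertical segment $\{x\}\times(-\delta,0)$ the rescaled one-dimensional Hardy inequality \eqref{eq:24.1} therefore gives $\int(f_k(x,y)/y)^2\,dy\leq 4\int|\partial_y f_k(x,y)|^2\,dy$. Since $\xi\equiv 0$ for $|x|<r/4$, on $\mathrm{supp}\,\xi\cap B_1^-$ the $1/y$ singularity only matters in the subregion $|y|\ll|x|$, where by \eqref{eq:stima_rho} the weight $\rho^{6-\epsilon-2\tau}$ is, up to bounded multiplicative constants, independent of $y$ within any single vertical slice; it can therefore be taken through the Hardy integral to obtain
\begin{equation*}
\int\rho^{6-\epsilon-2\tau}\xi^2|H|^2\,dxdy\leq CM_1^2\int\rho^{6-\epsilon-2\tau}\sum_{j\leq 3}|D^j\overline{u}|^2\,dxdy,
\end{equation*}
again absorbed into the $j=3$ summand of the Carleman left-hand side. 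This Hardy step is the most delicate point and the reason the $H^6$-regularity of $u$ supplied by Proposition \ref{prop:conf_map} is essential: $\partial_y f_k$ is a genuine third-order derivative of $\overline{u}$ and must lie in $L^2$.

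Once every contribution to $\xi\overline{F}$ has been absorbed, only the commutator survives on the right. I would then retain only the $k=0$ summand of the Carleman left-hand side and restrict its integration domain to $B_R^+\setminus B_{r/2}^+$ (where $\xi\equiv 1$), using $\rho\leq R$ together with $\epsilon-2-2\tau<0$ to get $\rho^{\epsilon-2-2\tau}\geq cR^{\epsilon-2-2\tau}$ and hence a lower bound of $\tau^6 R^{\epsilon-2-2\tau}\int_{B_R^+\setminus B_{r/2}^+}u^2$; the missing inner portion $\int_{B_{r/2}^+}u^2\leq\int_{B_r^+}u^2$ is moved to the right-hand side. For the commutator, $|[\Delta^2,\xi]\overline{u}|^2\leq C\sum_{j=0}^3 r^{-2(4-j)}|D^j\overline{u}|^2$ on $A_r$ (analogously with $R_0$ on $A_{R_0}$); by Lemma \ref{lem:intermezzo} applied to $u$, together with the pointwise control of $w$ and its derivatives through $u$, the $H^{\leq 3}$-norms of $\overline{u}$ on $A_r$ and $A_{R_0}$ are dominated by $L^2$-norms of $u$ on the concentric half-balls $B_r^+$ and $B_{R_0}^+$. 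Using $\rho\sim r$ on $A_r$ and $\rho\sim R_0$ on $A_{R_0}$, the schematic outcome is
\begin{equation*}
\tau^6 R^{\epsilon-2-2\tau}\!\!\int_{B_R^+}\!\!u^2\leq C(M_1^2+1)\Big(r^{-2\tau}\!\!\int_{B_r^+}\!\!u^2+R_0^{-2\tau}\!\!\int_{B_{R_0}^+}\!\!u^2\Big),
\end{equation*}
and optimizing $\tau\geq\overline{\tau}$ by balancing the two summands on the right produces exactly the convexity exponent $\widetilde{\theta}$ of \eqref{eq:39.1}; the remaining polynomial factors in $r,R,R_0$ combine into the prefactor $R^{2\epsilon}(R_0/R)^C$ of \eqref{eq:40.1}, completing the proof.
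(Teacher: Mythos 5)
Your overall architecture mirrors the paper's: apply the Carleman estimate to $\xi\overline{u}$, split $\Delta^2(\xi\overline{u})$ into $\xi\overline{F}$ plus the commutator, decompose $F_1$ via Proposition \ref{prop:19.2}, and use Hardy's inequality to control the singular $H$-term. The gap is in how you apply Hardy's inequality. You propose to apply \eqref{eq:24.1} along vertical slices to $f_k(x,\cdot)/y$ alone, and then "take the weight $\rho^{6-\epsilon-2\tau}\xi^2$ through the Hardy integral" on the grounds that $\rho$ is approximately constant in $y$ on any single slice within $\mathrm{supp}\,\xi$. This fails: the ratio of $\rho(x,y_1)$ to $\rho(x,y_2)$ along a vertical slice is bounded away from $1$ by an absolute constant, and raising that ratio to the power $2\tau$ gives a factor exploding exponentially in $\tau$. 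Since the whole three-spheres mechanism depends on $\tau$-uniform constants in the absorption step (otherwise the optimization in $\tau$ produces nothing), this destroys the argument. Note also that the support of the radial cutoff $\xi(x,y)=\eta(\sqrt{x^2+y^2})$ is not $\{|x|\geq r/4\}$, so the claim "the $1/y$ singularity only matters where $|y|\ll|x|$" is not literally usable in the way you state.

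The paper resolves this by applying Hardy to the \emph{full weighted product} $g(y)=f_k(x,y)\,\rho^{(6-\epsilon-2\tau)/2}(x,y)\,\xi(x,y)$, which still vanishes at $y=0$. Differentiating produces, via \eqref{eq:32.3}, two kinds of terms: $\partial_y f_k\cdot\rho^{(6-\epsilon-2\tau)/2}\xi$ (a third-order derivative of $\overline{u}$, absorbed at the $k=3$ level as you anticipate), and $f_k\cdot\partial_y(\rho^{(6-\epsilon-2\tau)/2}\xi)$, the latter bounded by $|f_k|\bigl(|\xi_y|\rho^{(6-\epsilon-2\tau)/2}+C\tau\rho^{(4-\epsilon-2\tau)/2}\xi\bigr)$. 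The second contribution yields $\tau^2\rho^{4-\epsilon-2\tau}|D^2\overline{u}|^2$, which must be absorbed at the $k=2$ level of the Carleman estimate, not the $k=3$ level; this is exactly why the paper's intermediate inequality \eqref{eq:34.1} has the correction factor $\bigl(1-CM_1^2\rho^{2-2\epsilon}\bigr)$ and requires a second smallness condition on $R_0$. Your proposal absorbs $H$ entirely into the $k=3$ summand and therefore omits this whole step; the extra $\tau^2$ factor you also drop in the final schematic inequality is a symptom of the same omission. Without differentiating the weight inside the Hardy step, there is no $\tau$-uniform bound and the argument cannot close.
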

\begin{proof}
Let $\epsilon \in (0,1)$ be fixed, for instance $\epsilon=\frac{1}{2}$. However, it is convenient to maintain the parameter $\epsilon$ in the calculations. Along this proof, $C$ shall denote  a positive constant which may change {}from line to line.
Let $R_0\in (0,\widetilde{R}_0)$ to be chosen later, where $\widetilde{R}_0$ has been introduced in Proposition \ref{prop:Carleman}, and let
\begin{equation}
    \label{eq:25.1}
0<r<R<\frac{R_0}{2}.
\end{equation}
Let $\eta\in C^\infty_0((0,1))$ such that
\begin{equation}
    \label{eq:25.2}
0\leq \eta\leq 1,
\end{equation}
\begin{equation}
    \label{eq:25.3}
\eta=0, \quad \hbox{ in }\left(0,\frac{r}{4}\right)\cup \left(\frac{2}{3}R_0,1\right),
\end{equation}
\begin{equation}
    \label{eq:25.4}
\eta=1, \quad \hbox{ in }\left[\frac{r}{2}, \frac{R_0}{2}\right],
\end{equation}
\begin{equation}
    \label{eq:25.6}
\left|\frac{d^k\eta}{dt^k}(t)\right|\leq C r^{-k}, \quad \hbox{ in }\left(\frac{r}{4}, \frac{r}{2}\right),\quad\hbox{ for } 0\leq k\leq 4,
\end{equation}
\begin{equation}
    \label{eq:25.7}
\left|\frac{d^k\eta}{dt^k}(t)\right|\leq C R_0^{-k}, \quad \hbox{ in }\left(\frac{R_0}{2}, \frac{2}{3}R_0\right),\quad\hbox{ for } 0\leq k\leq 4.
\end{equation}
Let us define
\begin{equation}
    \label{eq:25.5}
\xi(x,y)=\eta(\sqrt{x^2+y^2}).
\end{equation}
By a density argument, we may apply the Carleman estimate \eqref{eq:24.4} to $U=\xi \overline{u}$, where $\overline{u}$ has been defined in \eqref{eq:16.1}, obtaining
\begin{multline}
    \label{eq:26.1}
	\sum_{k=0}^3 \tau^{6-2k}\int_{B_{R_0}^+}\rho^{2k+\epsilon-2-2\tau}|D^k(\xi u)|^2
	+\sum_{k=0}^3 \tau^{6-2k}\int_{B_{R_0}^-}\rho^{2k+\epsilon-2-2\tau}|D^k(\xi w)|^2\leq \\
	\leq C
	\int_{B_{R_0}^+}\rho^{6-\epsilon-2\tau}|\Delta^2(\xi u)|^2+
	C\int_{B_{R_0}^-}\rho^{6-\epsilon-2\tau}|\Delta^2(\xi w)|^2,
\end{multline}
for $\tau\geq \overline{\tau}$ and $C$ an absolute constant.

By \eqref{eq:25.2}--\eqref{eq:25.5} we have

\begin{multline}
    \label{eq:26.2}
	|\Delta^2(\xi u)|\leq \xi|\Delta^2 u|+C\chi_{B_{r/2}^+\setminus B_{r/4}^+}
	\sum_{k=0}^3 r^{k-4}|D^k u|+ C\chi_{B_{2R_0/3}^+\setminus B_{R_0/2}^+}\sum_{k=0}^3 R_0^{k-4}|D^k u|,
\end{multline}
\begin{multline}
    \label{eq:26.3}
	|\Delta^2(\xi w)|\leq \xi|\Delta^2 w|+C\chi_{B_{r/2}^-\setminus B_{r/4}^-}
	\sum_{k=0}^3 r^{k-4}|D^k w|+ C\chi_{B_{2R_0/3}^-\setminus B_{R_0/2}^-}\sum_{k=0}^3 R_0^{k-4}|D^k w|.
\end{multline}
Let us set
\begin{multline}
    \label{eq:27.1}
	J_0 =\int_{B_{r/2}^+\setminus B_{r/4}^+}\rho^{6-\epsilon-2\tau}
	\sum_{k=0}^3 (r^{k-4}|D^k u|)^2+
	\int_{B_{r/2}^-\setminus B_{r/4}^-}\rho^{6-\epsilon-2\tau}
	\sum_{k=0}^3 (r^{k-4}|D^k w|)^2,
\end{multline}
\begin{multline}
    \label{eq:27.2}
	J_1 =\int_{B_{2R_0/3}^+\setminus B_{R_0/2}^+}\rho^{6-\epsilon-2\tau}
	\sum_{k=0}^3 (R_0^{k-4}|D^k u|)^2+
	\int_{B_{2R_0/3}^-\setminus B_{R_0/2}^-}\rho^{6-\epsilon-2\tau}
	\sum_{k=0}^3 (R_0^{k-4}|D^k w|)^2.
\end{multline}
By inserting \eqref{eq:26.2}, \eqref{eq:26.3} in \eqref{eq:26.1} we have
\begin{multline}
    \label{eq:27.3}
	\sum_{k=0}^3 \tau^{6-2k}\int_{B_{R_0}^+}\rho^{2k+\epsilon-2-2\tau}|D^k(\xi u)|^2
	+\sum_{k=0}^3 \tau^{6-2k}\int_{B_{R_0}^-}\rho^{2k+\epsilon-2-2\tau}|D^k(\xi w)|^2\leq \\
	\leq C
	\int_{B_{R_0}^+}\rho^{6-\epsilon-2\tau}\xi^2|\Delta^2 u|^2+
	C\int_{B_{R_0}^-}\rho^{6-\epsilon-2\tau}\xi^2|\Delta^2 w|^2+CJ_0+CJ_1,
\end{multline}
for $\tau\geq \overline{\tau}$, with $C$ an absolute constant.

By \eqref{eq:15.1a} and \eqref{eq:15.2_bis} we can estimate the first term in the right hand side of \eqref{eq:27.3} as follows
\begin{equation}
    \label{eq:28.1}
	\int_{B_{R_0}^+}\rho^{6-\epsilon-2\tau}\xi^2|\Delta^2 u|^2\leq
	CM_1^2\int_{B_{R_0}^+}\rho^{6-\epsilon-2\tau}\xi^2\sum_{k=0}^3|D^k u|^2.
\end{equation}

By \eqref{eq:17.1}, \eqref{eq:19.1} and by making the change of variables
$(x,y)\rightarrow(x,-y)$ in the integrals involving the function $u(x,-y)$,
we can estimate the second term in the right hand side of \eqref{eq:27.3} as follows
\begin{multline}
    \label{eq:28.2}
	\int_{B_{R_0}^-}\rho^{6-\epsilon-2\tau}\xi^2|\Delta^2 w|^2\leq
	C\int_{B_{R_0}^-}\rho^{6-\epsilon-2\tau}\xi^2|H(x,y)|^2+\\
	+CM_1^2\int_{B_{R_0}^-}\rho^{6-\epsilon-2\tau}\xi^2\sum_{k=0}^2|D^k w|^2+
	CM_1^2\int_{B_{R_0}^+}\rho^{6-\epsilon-2\tau}\xi^2\sum_{k=0}^3|D^k u|^2.
\end{multline}
Now, let us split the integral in the right hand side of \eqref{eq:28.1} and the second and third integrals in the right hand side of \eqref{eq:28.2} over the domains of integration $B_{r/2}^\pm\setminus B_{r/4}^\pm$, $B_{R_0/2}^\pm\setminus B_{r/2}^\pm$, $B_{2R_0/3}^\pm\setminus B_{R_0/2}^\pm$ and then let us insert \eqref{eq:28.1}--\eqref{eq:28.2} so rewritten in \eqref{eq:27.3}, obtaining
\begin{multline}
    \label{eq:28.4}
	\sum_{k=0}^3 \tau^{6-2k}\int_{B_{R_0}^+}\rho^{2k+\epsilon-2-2\tau}|D^k(\xi u)|^2
	+\sum_{k=0}^3 \tau^{6-2k}\int_{B_{R_0}^-}\rho^{2k+\epsilon-2-2\tau}|D^k(\xi w)|^2\leq \\
	\leq
	C\int_{B_{R_0}^-}\rho^{6-\epsilon-2\tau}\xi^2|H(x,y)|^2
	+CM_1^2\int_{B_{R_0/2}^- \setminus B_{r/2}^-}\rho^{6-\epsilon-2\tau}\sum_{k=0}^2|D^k w|^2+\\+
	CM_1^2\int_{B_{R_0/2}^+ \setminus B_{r/2}^+}\rho^{6-\epsilon-2\tau}\sum_{k=0}^3|D^k u|^2
	+C(M_1^2+1)(J_0+J_1),
\end{multline}
for $\tau\geq \overline{\tau}$, with $C$ an absolute constant.
\end{proof}
Next, by estimating {}from below the integrals in the left hand side of this last inequality reducing their domain of integration to $B_{R_0/2}^\pm\setminus B_{r/2}^\pm$, where $\xi=1$, we have
\begin{multline}
    \label{eq:29.1}
	\sum_{k=0}^3 \int_{B_{R_0/2}^+ \setminus B_{r/2}^+}\tau^{6-2k}
	(1-CM_1^2\rho^{8-2\epsilon-2k})\rho^{2k+\epsilon-2-2\tau}|D^k u|^2+\\
  +\int_{B_{R_0/2}^- \setminus B_{r/2}^-}\rho^{4+\epsilon-2\tau}|D^3 w|^2
	+\sum_{k=0}^2\int_{B_{R_0/2}^- \setminus B_{r/2}^-}\tau^{6-2k}
	(1-CM_1^2\rho^{8-2\epsilon-2k})\rho^{2k+\epsilon-2-2\tau}|D^k w|^2
	\leq \\
	\leq
	C\int_{B_{R_0}^-}\rho^{6-\epsilon-2\tau}\xi^2|H(x,y)|^2
	+C(M_1^2+1)(J_0+J_1),
\end{multline}
for $\tau\geq \overline{\tau}$, with $C$ an absolute constant.
		
Recalling \eqref{eq:stima_rho}, we have that, for $k=0,1,2,3$ and for
$R_0\leq R_1:=\min\{\widetilde{R}_0,2(2CM_1^2)^{-\frac{1}{2(1-\epsilon)}}\}$,

\begin{equation}
    \label{eq:30.1}
	1-CM_1^2\rho^{8-2\epsilon-2k}\geq \frac{1}{2}, \quad \hbox{ in }B_{R_0/2}^\pm,
\end{equation}
so that, inserting \eqref{eq:30.1} in \eqref{eq:29.1}, we have
\begin{multline}
    \label{eq:30.3}
	\sum_{k=0}^3 \tau^{6-2k}\int_{B_{R_0/2}^+ \setminus B_{r/2}^+}
	\rho^{2k+\epsilon-2-2\tau}|D^k u|^2
	+\sum_{k=0}^3 \tau^{6-2k}\int_{B_{R_0/2}^- \setminus B_{r/2}^-}
	\rho^{2k+\epsilon-2-2\tau}|D^k w|^2
	\leq \\
	\leq
	C\int_{B_{R_0}^-}\rho^{6-\epsilon-2\tau}\xi^2|H(x,y)|^2
	+C(M_1^2+1)(J_0+J_1),
\end{multline}
for $\tau\geq \overline{\tau}$, with $C$ an absolute constant.

By \eqref{eq:19.2} and \eqref{eq:15.2_bis}, we have that
\begin{equation}
    \label{eq:30.4}
	\int_{B_{R_0}^-}\rho^{6-\epsilon-2\tau}\xi^2|H(x,y)|^2\leq CM_1^2(I_1+I_2+I_3),
\end{equation}
with
\begin{equation}
    \label{eq:31.0.1}
	I_1=\int_{-R_0}^{R_0}\left(\int_{-\infty}^0\left|y^{-1}(w_{yy}(x,y)-
	(u_{yy}(x,-y))\rho^\frac{6-\epsilon-2\tau}{2}\xi\right|^2dy\right)dx.
\end{equation}
\begin{equation}
    \label{eq:31.0.2}
	I_2=\int_{-R_0}^{R_0}\left(\int_{-\infty}^0\left|y^{-1}(w_{yx}(x,y)+
	(u_{yx}(x,-y))\rho^\frac{6-\epsilon-2\tau}{2}\xi\right|^2dy\right)dx.
\end{equation}
\begin{equation}
    \label{eq:31.0.4}
	I_3=\int_{-R_0}^{R_0}\left(\int_{-\infty}^0\left|y^{-1}
	u_{xx}(x,-y)\rho^\frac{6-\epsilon-2\tau}{2}\xi\right|^2dy\right)dx.
\end{equation}
Now, let us see that, for $j=1,2,3$,
\begin{multline}
    \label{eq:31.1}
	I_j\leq
	C\int_{B_{R_0}^-}\rho^{6-\epsilon-2\tau}\xi^2|D^3 w|^2
	+C\tau^2\int_{B_{R_0}^-}\rho^{4-\epsilon-2\tau}\xi^2|D^2 w|^2+\\
	+C\int_{B_{R_0}^+}\rho^{6-\epsilon-2\tau}\xi^2|D^3 u|^2
	+C\tau^2\int_{B_{R_0}^+}\rho^{4-\epsilon-2\tau}\xi^2|D^2 u|^2
	+C(J_0+J_1),
\end{multline}
for $\tau\geq \overline{\tau}$, with $C$ an absolute constant.

Let us verify \eqref{eq:31.1} for $j=1$, the other cases following by using similar arguments.

By \eqref{eq:23.2}, we can apply Hardy's inequality \eqref{eq:24.1}, obtaining
\begin{multline}
    \label{eq:32.2}
	\int_{-\infty}^0\left|y^{-1}(w_{yy}(x,y)-
	(u_{yy}(x,-y))\rho^{\frac{6-\epsilon-2\tau}{2}}\xi\right|^2dy\leq\\
	\leq 4\int_{-\infty}^0\left|\partial_y\left[(w_{yy}(x,y)-
	(u_{yy}(x,-y))\rho^{\frac{6-\epsilon-2\tau}{2}}\xi\right]\right|^2dy\leq\\
	\leq 16 \int_{-\infty}^0\left(|w_{yyy}(x,y)|^2 +|u_{yyy}(x,-y)|^2\right)\rho^{6-\epsilon-2\tau}\xi^2dy+\\
	16 \int_{-\infty}^0\left(|w_{yy}(x,y)|^2 +|u_{yy}(x,-y)|^2\right)\left|\partial_y\left(\rho^{\frac{6-\epsilon-2\tau}{2}}\xi\right)\right|^2dy.
\end{multline}
Noticing that
\begin{equation}
    \label{eq:32.1}
	|\rho_y|\leq\left|\frac{y}{\sqrt{x^2+y^2}}\varphi'(\sqrt{x^2+y^2})\right|\leq 1,
\end{equation}
we can compute
\begin{multline}
    \label{eq:32.3}
	\left|\partial_y\left(\rho^{\frac{6-\epsilon-2\tau}{2}}(x,y)\xi(x,y)\right)\right|^2\leq
	2|\xi_y|^2\rho^{6-\epsilon-2\tau}+2\left|\left(\frac{6-\epsilon-2\tau}{2}\right)\xi \rho_y\rho^{\frac{4-\epsilon-2\tau}{2}}\right|^2\leq\\
	\leq 2\xi_y^2\rho^{6-\epsilon-2\tau}+2\tau^2\rho^{4-\epsilon-2\tau}\xi^2,
\end{multline}
for $\tau\geq \widetilde{\tau}:= \max\{\overline{\tau},3\}$, with $C$ an absolute constant.

By inserting \eqref{eq:32.3} in \eqref{eq:32.2}, by integrating over $(-R_0,R_0)$ and
by making the change of variables
$(x,y)\rightarrow(x,-y)$ in the integrals involving the function $u(x,-y)$, we derive
\begin{multline}
    \label{eq:33.0}
	I_1\leq C\int_{B_{R_0}^-}\xi^2\rho^{6-\epsilon-2\tau}|w_{yyy}|^2+
	C\int_{B_{R_0}^+}\xi^2\rho^{6-\epsilon-2\tau}|u_{yyy}|^2+\\
	+C\int_{B_{R_0}^-}\xi_y^2\rho^{6-\epsilon-2\tau}|w_{yy}|^2
	+C\int_{B_{R_0}^+}\xi_y^2\rho^{6-\epsilon-2\tau}|u_{yy}|^2+\\
	+C\tau^2\int_{B_{R_0}^-}\xi^2\rho^{4-\epsilon-2\tau}|w_{yy}|^2
	+C\tau^2\int_{B_{R_0}^+}\xi^2\rho^{4-\epsilon-2\tau}|u_{yy}|^2.
\end{multline}
Recalling \eqref{eq:25.2}--\eqref{eq:25.5}, we find \eqref{eq:31.1} for $j=1$.

Next, by \eqref{eq:30.3}, \eqref{eq:30.4} and \eqref{eq:31.1}, we have
\begin{multline}
    \label{eq:33.1}
	\sum_{k=0}^3 \tau^{6-2k}\int_{B_{R_0/2}^+ \setminus B_{r/2}^+}
	\rho^{2k+\epsilon-2-2\tau}|D^k u|^2
	+\sum_{k=0}^3 \tau^{6-2k}\int_{B_{R_0/2}^- \setminus B_{r/2}^-}
	\rho^{2k+\epsilon-2-2\tau}|D^k w|^2
	\leq \\
	\leq
	CM_1^2\int_{B_{R_0}^+}\rho^{6-\epsilon-2\tau}\xi^2|D^3u|^2+
	CM_1^2\int_{B_{R_0}^-}\rho^{6-\epsilon-2\tau}\xi^2|D^3w|^2+\\
	+CM_1^2\tau^2\int_{B_{R_0}^+}\rho^{4-\epsilon-2\tau}\xi^2|D^2u|^2
	+CM_1^2\tau^2\int_{B_{R_0}^-}\rho^{4-\epsilon-2\tau}\xi^2|D^2w|^2
	+C(M_1^2+1)(J_0+J_1),
\end{multline}
for $\tau\geq \widetilde{\tau}$, with $C$ an absolute constant.

Now, let us split the first four integrals in the right hand side of \eqref{eq:33.1}  over the domains of integration $B_{r/2}^\pm\setminus B_{r/4}^\pm$, $B_{2R_0/3}^\pm\setminus B_{R_0/2}^\pm$ and $B_{R_0/2}^\pm\setminus B_{r/2}^\pm$ and move on the left hand side the integrals over $B_{R_0/2}^\pm\setminus B_{r/2}^\pm$. Recalling \eqref{eq:stima_rho}, we obtain
\begin{multline}
    \label{eq:34.1}
	\sum_{k=2}^3 \int_{B_{R_0/2}^+ \setminus B_{r/2}^+}
	\tau^{6-2k}(1-CM_1^2\rho^{2-2\epsilon})\rho^{2k+\epsilon-2-2\tau}|D^k u|^2+\\
		+\sum_{k=2}^3 \int_{B_{R_0/2}^- \setminus B_{r/2}^-}
	\tau^{6-2k}(1-CM_1^2\rho^{2-2\epsilon})\rho^{2k+\epsilon-2-2\tau}|D^k w|^2+\\
	+\sum_{k=0}^1 \tau^{6-2k}\int_{B_{R_0/2}^+ \setminus B_{r/2}^+}
	\rho^{2k+\epsilon-2-2\tau}|D^k u|^2
	+\sum_{k=0}^1 \tau^{6-2k}\int_{B_{R_0/2}^- \setminus B_{r/2}^-}
	\rho^{2k+\epsilon-2-2\tau}|D^k w|^2
	\leq \\
	\leq
	C(\tau^2M_1^2+1)(J_0+J_1),
\end{multline}
for $\tau\geq \widetilde{\tau}$, with $C$ an absolute constant.

Therefore, for $R_0\leq R_2=\min\{R_1,2(2CM_1^2)^{-\frac{1}{2(1-\epsilon)}}\}$,
it follows that
\begin{multline}
    \label{eq:35.1}
	\sum_{k=0}^3 \tau^{6-2k}\int_{B_{R_0/2}^+ \setminus B_{r/2}^+}
	\rho^{2k+\epsilon-2-2\tau}|D^k u|^2+
	\sum_{k=0}^3 \tau^{6-2k}\int_{B_{R_0/2}^- \setminus B_{r/2}^-}
	\rho^{2k+\epsilon-2-2\tau}|D^k w|^2\leq\\	
	\leq
	C(\tau^2M_1^2+1)(J_0+J_1),
\end{multline}
for $\tau\geq \widetilde{\tau}$, with $C$ an absolute constant.

Let us estimate $J_0$ and $J_1$. {}From \eqref{eq:27.1} and recalling \eqref{eq:stima_rho}, we have
\begin{multline}
    \label{eq:36.1}
	J_0\leq\left(\frac{r}{4}\right)^{6-\epsilon-2\tau}\left\{
	\int_{B^+_{r/2}}\sum_{k=0}^3(r^{k-4}|D^k u|)^2+
	\int_{B^-_{r/2}}\sum_{k=0}^3(r^{k-4}|D^k w|)^2
	\right\}.
\end{multline}
By \eqref{eq:16.2}, we have that, for $(x,y)\in B^-_{r/2}$ and $k=0,1,2,3$,
\begin{equation}
    \label{eq:36.1bis}
	|D^k w|\leq C\sum_{h=k}^{2+k}r^{h-k}|(D^h u)(x,-y)|.
\end{equation}
By \eqref{eq:36.1}--\eqref{eq:36.1bis},
by making the change of variables
$(x,y)\rightarrow(x,-y)$ in the integrals involving the function $u(x,-y)$ and by using Lemma \ref{lem:intermezzo}, we get
\begin{multline}
    \label{eq:36.2}
	J_0\leq C\left(\frac{r}{4}\right)^{6-\epsilon-2\tau}
	\sum_{k=0}^5 r^{2k-8}\int_{B^+_{r/2}}|D^k u|^2
	 \leq C\left(\frac{r}{4}\right)^{-2-\epsilon-2\tau}\int_{B_r^+}|u|^2,
\end{multline}
where $C$ is an absolute constant. Analogously, we obtain
\begin{equation}
    \label{eq:37.1}
	J_1
	 \leq C\left(\frac{R_0}{2}\right)^{-2-\epsilon-2\tau}\int_{B_{R_0}^+}|u|^2.
\end{equation}

Let $R$ such that $r<R<\frac{R_0}{2}$. By \eqref{eq:35.1}, \eqref{eq:36.2}, \eqref{eq:37.1}, it follows that
\begin{multline}
    \label{eq:37.1bis}
	\tau^{6}R^{\epsilon-2-2\tau}\int_{B_{R}^+ \setminus B_{r/2}^+}
	|u|^2
	\leq\sum_{k=0}^3\tau^{6-2k}\int_{B_{R_0/2}^+ \setminus B_{r/2}^+}
	\rho^{2k+\epsilon-2-2\tau}|D^ku|^2\leq\\
	\leq C\tau^2 (M_1^2+1)\left[\left(\frac{r}{4}\right)^{-2-\epsilon-2\tau}\int_{B_r^+}|u|^2+
	\left(\frac{R_0}{2}\right)^{-2-\epsilon-2\tau}\int_{B_{R_0}^+}|u|^2
	\right],
\end{multline}
for $\tau\geq \widetilde{\tau}$, with $C$ an absolute constant.
Since $\tau>1$, we may rewrite the above inequality as follows
\begin{multline}
    \label{eq:37.2}
	R^{2\epsilon}\int_{B_{R}^+ \setminus B_{r/2}^+}
	|u|^2\leq
	C(M_1^2+1)\left[\left(\frac{r/4}{R}\right)^{-2-\epsilon-2\tau}\int_{B_r^+}|u|^2+
	\left(\frac{R_0/2}{R}\right)^{-2-\epsilon-2\tau}\int_{B_{R_0}^+}|u|^2
	\right],
\end{multline}
for $\tau\geq \widetilde{\tau}$, with $C$ an absolute constant.
By adding $R^{2\epsilon}\int_{B_{r/2}^+}|u|^2$ to both members of \eqref{eq:37.2}, and setting, for $s>0$,
\begin{equation*}
	\sigma_s=\int_{B_{s}^+}|u|^2,
\end{equation*}
we obtain
\begin{equation}
    \label{eq:38.1}
	R^{2\epsilon}\sigma_R\leq
	C(M_1^2+1)
	\left[\left(\frac{r/4}{R}\right)^{-2-\epsilon-2\tau}\sigma_r+
	\left(\frac{R_0/2}{R}\right)^{-2-\epsilon-2\tau}\sigma_{R_0}
	\right],
\end{equation}
for $\tau\geq \widetilde{\tau}$, with $C$ an absolute constant.

Let $\tau^*$ be such that
\begin{equation}
    \label{eq:38.2}
	\left(\frac{r/4}{R}\right)^{-2-\epsilon-2\tau^*}\sigma_r=
	\left(\frac{R_0/2}{R}\right)^{-2-\epsilon-2\tau^*}\sigma_{R_0},
\end{equation}
that is
\begin{equation}
    \label{eq:38.3}
	2+\epsilon+2\tau^*=\frac{\log (\sigma_{R_0}/\sigma_r)}{\log \left(\frac{R_0/2}{r/4}\right)}.
\end{equation}
Let us distinguish two cases:

\begin{enumerate}[i)]

\item
$\tau^*\geq \widetilde{\tau}$,

\item

$\tau^*< \widetilde{\tau}$,

\end{enumerate}
and set
\begin{equation}
    \label{eq:39.1bis}
	\widetilde{\theta}=\frac{\log \left(\frac{R_0/2}{R}\right)}{\log \left(\frac{R_0/2}{r/4}\right)}.
\end{equation}
In case i), it is possible to choose $\tau = \tau^*$ in \eqref{eq:38.1}, obtaining,
by \eqref{eq:38.2}--\eqref{eq:39.1bis},
\begin{equation}
    \label{eq:39.2}
	R^{2\epsilon}\sigma_R\leq C(M_1^2+1)\sigma_r^{\widetilde{\theta}}\sigma_{R_0}^{1-\widetilde{\theta}}.
\end{equation}
In case ii), since $\tau^*< \widetilde{\tau}$, {}from \eqref{eq:38.3}, we have
\begin{equation*}
\frac{\log (\sigma_{R_0}/\sigma_r)}{\log \left(\frac{R_0/2}{r/4}\right)}<2+\epsilon+2\widetilde{\tau},
\end{equation*}
so that, multiplying both members by $\log \left(\frac{R_0/2}{R}\right)$, it follows that
\begin{equation*}
\widetilde{\theta}\log\left(\frac{\sigma_{R_0}}{\sigma_r}\right)<\log\left(\frac{R_0/2}{R}\right)
^{2+\epsilon+2\widetilde{\tau}},
\end{equation*}
and hence
\begin{equation}
    \label{eq:39.3}
	\sigma_{R_0}^{\widetilde{\theta}}\leq \left(\frac{R_0/2}{R}\right)^{2+\epsilon+2\widetilde{\tau}}\sigma_r^{\widetilde{\theta}}.
\end{equation}
Then is follows trivially that
\begin{equation}
    \label{eq:39.4}
	R^{2\epsilon}\sigma_R\leq R^{2\epsilon}\sigma_{R_0}\leq
	R^{2\epsilon}\left(\frac{R_0/2}{R}\right)^{2+\epsilon+2\widetilde{\tau}}\sigma_r^{\widetilde{\theta}}\sigma_{R_0}^{1-\widetilde{\theta}}.
\end{equation}
Finally, by \eqref{eq:39.2} and \eqref{eq:39.4}, we obtain
\eqref{eq:40.1}.

\begin{proof}[Proof of Theorem \ref{theo:40.teo}]
Let $r_1<r_2<\frac{r_0R_0}{2K}<r_0$, where $R_0$ is chosen such that $R_0<\gamma<1$, where $\gamma$ has been introduced in Theorem
\ref{theo:40.prop3} and $K>1$ is the constant introduced in Proposition
\ref{prop:conf_map}.
Let us define
\begin{equation*}
   r=\frac{2r_1}{r_0}, \qquad R= \frac{Kr_2}{r_0}.
\end{equation*}
Recalling that $K>8$, it follows immediately that $r<R<\frac{R_0}{2}$.
Therefore, we can apply \eqref{eq:40.1} with $\epsilon=\frac{1}{2}$ to $u=v\circ\Phi$, obtaining
\begin{equation}
    \label{eq:3sfere_u}
\int_{B_R^+}u^2\leq \frac{C}{R^C}\left(\int_{B_r^+}u^2\right)^{\widetilde{\theta}}\left(\int_{B_{R_0}^+}u^2\right)^{1-\widetilde{\theta}},
\end{equation}
with
\begin{equation*}
\widetilde{\theta} = \frac{\log\left(\frac{R_0r_0}{2Kr_2}\right)}{\log\left(\frac{R_0r_0}{r_1}\right)}.
\end{equation*}
and $C>1$ only depending on $M_0$, $\alpha$, $\alpha_0$ e $\gamma_0$ and $\Lambda_0$.

{}From \eqref{eq:gradPhiInv}, \eqref{eq:stimaPhi}, \eqref{eq:stimaPhiInv}
and noticing that
\begin{equation*}
\widetilde{\theta} \geq \theta:=\frac{\log\left(\frac{R_0r_0}{2Kr_2}\right)}{\log\left(\frac{r_0}{r_1}\right)},
\end{equation*}
we obtain \eqref{eq:41.1}--\eqref{eq:41.2}.
\end{proof}

\section{Appendix} \label{sec:
Appendix}
\begin{proof}[Proof of Proposition \ref{prop:conf_map}]
Let us construct a suitable extension of $g$ to $[-2r_0,2r_0]$.
Let $P_6^\pm$ be the Taylor polynomial of order 6 and center $\pm r_0$
\begin{equation*}
	P_6^\pm(x_1)=\sum_{j=0}^6 \frac{g^{(j)}(\pm r_0)}{j!}(x_1-(\pm r_0))^j,
\end{equation*}
and let $\chi\in C^\infty_0(\R)$ be a function satisfying
\begin{equation*}
	0\leq\chi\leq 1,
\end{equation*}
\begin{equation*}
	\chi=1, \hbox{ for } |x_1|\leq r_0,
\end{equation*}
\begin{equation*}
	\chi=0, \hbox{ for } \frac{3}{2}r_0\leq |x_1|\leq 2r_0,
\end{equation*}
\begin{equation*}
	|\chi^{(j)}(x_1)|\leq \frac{C}{r_0^j}, \hbox{ for } r_0\leq |x_1|\leq \frac{3}{2}r_0, \forall j\in \N.
\end{equation*}
Let us define
\begin{equation*}
    \widetilde{g}=\left\{
  \begin{array}{cc}
    g, & \hbox{ for } x_1\in [-r_0,r_0],\\
    \chi P_6^+, & \hbox{ for } x_1\in [r_0, 2r_0],\\
		\chi P_6^-, & \hbox{ for } x_1\in [-2r_0, -r_0].
  \end{array}
\right.
\end{equation*}
It is a straightforward computation to verify that
\begin{equation}
   \label{eq:3.2}
	\widetilde g(x_1)=0, \hbox{ for } \frac{3}{2}r_0\leq |x_1|\leq 2r_0,
\end{equation}
\begin{equation}
   \label{eq:3.2bis}
	|\widetilde g(x_1)|\leq 2M_0r_0, \hbox{ for } |x_1|\leq 2r_0,
\end{equation}
so that the graph of $\widetilde g$ is contained in $R_{2r_0,2M_0r_0}$ and
\begin{equation}
    \label{eq:3.3}
	\|\widetilde g \|_{C^{6,\alpha}([-2r_0,2r_0])}\leq CM_0r_0,
\end{equation}
where $C$ is an absolute constant.
Let
\begin{equation}
   \label{eq:Omega_r_0_tilde}
\widetilde{\Omega}_{r_0} = \left\{ x\in R_{2r_0,2M_0r_0}\ |\ x_2>\widetilde{g}(x_1)\right\},
\end{equation}
and let $k\in H^1(\widetilde{\Omega}_{r_0} )$ be the solution to
\begin{equation}
  \label{eq:3.4}
  \left\{ \begin{array}{ll}
  \Delta k =0, &
  \hbox{in } \widetilde{\Omega}_{r_0},\\
   &  \\
  k_{x_1}(2r_0,x_2) =k_{x_1}(-2r_0,x_2)=0,     & \hbox{for } 0\leq x_2\leq 2M_0r_0,\\
  &  \\
 k(x_1,2M_0r_0) =1, & \hbox{for } -2r_0\leq x_1\leq 2r_0,\\
 &  \\
 k(x_1,\widetilde{g}(x_1)) =0,  &\hbox{for } -2r_0\leq x_1\leq 2r_0.\\
  \end{array}\right.
\end{equation}
Let us notice that $k\in C^{6,\alpha}\left(\overline{\widetilde{\Omega}}_{r_0} \right)$.
 Indeed, this regularity is standard away {}from any neighborhoods of the four points $(\pm2r_0,0)$, $(\pm 2r_0, 2M_0r_0)$ and, by making a even reflection of $k$ w.r.t. the lines $x_1 = \pm 2r_0$ in a neighborhood in $\widetilde{\Omega}_{r_0}$ of each of these points, we can apply Schauder estimates and again obtain the stated regularity.

By the maximum principle, $\min_{\overline{\widetilde{\Omega}}_{r_0}}k =
\min_{\partial \widetilde{\Omega}_{r_0}}k$. In view of the boundary conditions, this minimum value cannot be achieved
in the closed segment $\{x_2=2M_0r_0, |x_1|\leq 2r_0\}$. It cannot be achieved in the
segments $\{\pm 2r_0\}\times (0,2M_0r_0)$ since the boundary conditions over these segment contradict Hopf Lemma (see \cite{l:GT}). Therefore the minimum is attained on the boundary portion
$\{(x_1, \widetilde{g}(x_1) \ | \ x_1\in [-2r_0,2r_0]\}$, so that
$\min_{\overline{\widetilde{\Omega}}_{r_0}}k = 0$.
Similarly, $\max_{\overline{\widetilde{\Omega}}_{r_0}}k = 1$ and, moreover, by the strong maximum and minimum principles,
$0<k(x_1,x_2)<1$, for every $(x_1,x_2)\in \widetilde{\Omega}_{r_0}$.

Denoting by $\mathcal R$ be the reflection around the line $x_1=2r_0$, let

\begin{equation*}
\Omega^*_{r_0}=\widetilde{\Omega}_{r_0}\cup \mathcal R(\widetilde{\Omega}_{r_0})\cup(\{2r_0\}\times (0,2M_0r_0)),
\end{equation*}
and let $k^*$ be the extension of $k$ to $\overline{\Omega}^*_{r_0}$ obtained by making an even reflection of $k$ around the line $x_1=2r_0$.

Next, let us extend $k^*$ by periodicity w.r.t. the $x_1$ variable to the unbounded strip
\begin{equation*}
	S_{r_0} = \cup_{l\in \Z} (\Omega^*_{r_0} + 8r_0le_1).
\end{equation*}

By Schauder estimates and by the periodicity of $k^*$, it follows that
\begin{equation}
    \label{eq:5.1}
	\|\nabla k^*\|_{L^\infty(S_{r_0})}\leq \frac{C_0}{r_0},
\end{equation}
with $C_0$ only depending on $M_0$ and $\alpha$.
Therefore there exists $\delta_0= \delta_0(M_0, \alpha)$, $0<\delta_0\leq \frac{1}{4}$, such that
\begin{equation}
    \label{eq:5.2}
	k^*(x_1,x_2)\geq \frac{1}{2} \quad \forall (x_1,x_2)\in \R\times[(1-\delta_0)2M_0r_0,2M_0r_0].
\end{equation}
Since $k^*>0$ in $S_{r_0}$, by applying Harnack inequality and Hopf Lemma (see \cite{l:GT}), we have
\begin{equation*}
	\frac{\partial k^*}{\partial x_2}\geq \frac{c_0}{r_0}, \quad \hbox{ on }
	\partial S_{r_0},
\end{equation*}
with $c_0$ only depending on $M_0$ and $\alpha$.
Therefore, the function $k^*$ satisfies
\begin{equation*}
  \left\{ \begin{array}{ll}
  \Delta \left(\frac{\partial k^*}{\partial x_2}\right) =0, &
  \hbox{in } S_{r_0},\\
   &  \\
  \frac{\partial k^*}{\partial x_2}\geq \frac{c_0}{r_0},     & \hbox{on } \partial S_{r_0}.\\
  \end{array}\right.
\end{equation*}
Moreover, $\frac{\partial k^*}{\partial x_2}$, being continuous and periodic w.r.t. the variable $x_1$, attains its minimum in $\overline{S}_{r_0}$. Since this minimum value cannot be attained in $S_{r_0}$, it follows that
\begin{equation}
   \label{eq:6.1}
	\frac{\partial k^*}{\partial x_2}\geq \frac{c_0}{r_0}, \quad \hbox{ in }
	\overline{S}_{r_0}.
\end{equation}
Now, let $h$ be an harmonic conjugate of $-k$ in $\widetilde{\Omega}_{r_0}$, that is
\begin{equation}
   \label{eq:6.2}
  \left\{ \begin{array}{ll}
  h_{x_1} = k_{x_2}, &\\
   &  \\
  h_{x_2} = -k_{x_1}. &\\
  \end{array}\right.
\end{equation}
The map $\Psi : = h+ik$ is a conformal map in $\widetilde{\Omega}_{r_0}$,
\begin{equation}
   \label{eq:DPsi}
  D\Psi =\left( \begin{array}{ll}
  k_{x_2} &-k_{x_1}\\
   &  \\
  k_{x_1} &k_{x_2}\\
  \end{array}\right)
\end{equation}
so that
$|D\Psi| = \sqrt 2|\nabla k|$ and, by \eqref{eq:5.1} and \eqref{eq:6.1},
\begin{equation}
   \label{eq:6.3}
 \sqrt 2\frac{c_0}{r_0}\leq |D\Psi|\leq \sqrt 2\frac{C_0}{r_0}, \quad \hbox{in } \widetilde{\Omega}_{r_0}.
\end{equation}
Let us analyze the behavior of $\Psi$ on the boundary of $\widetilde{\Omega}_{r_0}$
\begin{equation*}
   \partial{\widetilde{\Omega}_{r_0}} = \sigma_1\cup \sigma_2\cup \sigma_3\cup \sigma_4,
\end{equation*}
where
\begin{equation*}
   \sigma_1 = \{(x_1, \widetilde{g}(x_1)),\ | \ x_1\in [-2r_0,2r_0]\},\qquad
	\sigma_2 = \{(2r_0, x_2),\ | \ x_2\in [0,2M_0r_0]\},
\end{equation*}
\begin{equation*}
   \sigma_3 = \{(x_1,2M_0r_0),\ | \ x_1\in [-2r_0,2r_0]\}, \qquad
	\sigma_4 = \{(-2r_0, x_2),\ | \ x_2\in [0,2M_0r_0]\}.
\end{equation*}
On $\sigma_1$, we have
\begin{equation*}
   \Psi(x_1, \widetilde{g}(x_1))= h((x_1, \widetilde{g}(x_1))) +i0,
\end{equation*}
\begin{equation*}
   \frac{\partial}{\partial x_1}h(x_1, \widetilde{g}(x_1)=
	h_{x_1}(x_1, \widetilde{g}(x_1)+ h_{x_2}(x_1, \widetilde{g}(x_1)\widetilde{g}'(x_1)
	=-\sqrt{1+[\widetilde{g}'(x_1)]^2}(\nabla k\cdot n)>0,
\end{equation*}
where $n$ is the outer unit normal.
Therefore $\Psi$ is injective on $\sigma_1$ and $\Psi(\sigma_1)$ is an
interval $[a,b]$ contained in the line $\{y_2=0\}$, with
\begin{equation*}
   a=h(-2r_0, 0), \quad b=h(2r_0, 0).
\end{equation*}
On $\sigma_2$, we have
\begin{equation*}
   \Psi(2r_0, x_2)= h(2r_0, x_2)+ik(2r_0, x_2),
\end{equation*}
\begin{equation*}
   h_{x_2}(2r_0, x_2)=-k_{x_1}(2r_0, x_2)=0,
\end{equation*}
and similarly in $\sigma_4$,
so that $h(-2r_0, x_2)\equiv a$ and  $h(2r_0, x_2)\equiv b$ for $x_2\in[0,2M_0r_0]$ whereas, by \eqref{eq:6.1}, $k$ is increasing w.r.t. $x_2$. Therefore $\Psi$ is injective on $\sigma_2\cup \sigma_4$,
and maps $\sigma_2$ into the segment $\{b\}\times[0,1]$ and $\sigma_4$ into the segment $\{a\}\times[0,1]$.

On $\sigma_3$, we have
\begin{equation*}
   \Psi(x_1, 2M_0r_0)= h(x_1, 2M_0r_0) +i1,
\end{equation*}
\begin{equation*}
	h_{x_1}(x_1, 2M_0r_0) = k_{x_2}(x_1, 2M_0r_0)>0,
\end{equation*}
so that $h$ is increasing in $[-2r_0,2r_0]$, $\Psi$ is injective on $\sigma_3$ and $\Psi(\sigma_3)$ is the interval $[a,b]\times\{1\}$.

Therefore $\Psi$ maps in a bijective way the boundary of $\widetilde{\Omega}_{r_0}$  into the boundary of $[a,b]\times [0,1]$. Moreover, we have
\begin{equation}
   \label{eq:b-a}
   b-a= \int_{-2r_0}^{2r_0}h_{x_1}(x_1,2M_0r_0)dx_1 =
	 \int_{-2r_0}^{2r_0}k_{x_2}(x_1,2M_0r_0)dx_1.
\end{equation}
By \eqref{eq:5.1}, \eqref{eq:6.1} and \eqref{eq:b-a} the following estimate holds
\begin{equation}
   \label{eq:b-a_bis}
   4c_0\leq b-a\leq 4C_0.
\end{equation}	
By \eqref{eq:6.3}, we can apply the global inversion theorem, ensuring that
\begin{equation*}
	\Psi^{-1}: [a,b]\times [0,1]\rightarrow \overline{\widetilde{\Omega}}_{r_0}
\end{equation*}
is a conformal diffeomorphism. Moreover,
\begin{equation}
   \label{eq:DPsi_inversa}
  D(\Psi^{-1}) =\frac{1}{|\nabla k|^2}\left( \begin{array}{ll}
  k_{x_2} &k_{x_1}\\
   &  \\
  -k_{x_1} &k_{x_2}\\
  \end{array}\right),
\end{equation}
\begin{equation}
\label{eq:8.1}
\frac{\sqrt 2}{C_0}r_0\leq |D\Psi^{-1}|= \frac{\sqrt 2}{|\nabla k|}\leq \frac{\sqrt 2}{c_0}r_0, \quad \hbox{in } [a,b]\times [0,1].
\end{equation}
Now, let us see that the set $\Psi(\Omega_{r_0})$ contains a closed rectangle having one basis contained in the line $\{y_2=0\}$ and whose sides can be estimated in terms of $M_0$ and $\alpha$. To this aim we need to estimate the distance of $\Psi(0,0)=(\overline{\xi}_1,0)$ {}from the edges $(a,0)$ and $(b,0)$ of the rectangle $[a,b]\times[0,1]$. Recalling that $\widetilde{g}\equiv 0$ for
$\frac{3}{2}r_0\leq |x_1|\leq 2r_0$, we have that $\sigma_1$ contains the segments
$\left[-2r_0,-\frac{3}{2}r_0\right]\times \{0\}$, $\left[\frac{3}{2}r_0,2r_0\right]\times \{0\}$, so that
\begin{equation}
   \label{eq:segmentino}
   h(2r_0,0)-h\left(\frac{3}{2}r_0,0\right)= \int_{\frac{3}{2}r_0}^{2r_0}h_{x_1}(x_1,0)dx_1 =
	 \int_{\frac{3}{2}r_0}^{2r_0}k_{x_2}(x_1,0)dx_1.
\end{equation}
By \eqref{eq:5.1}, \eqref{eq:6.1} and \eqref{eq:segmentino} we derive
\begin{equation}
   \label{eq:segmentino_bis}
   \frac{c_0}{2}\leq h(2r_0,0)-h\left(\frac{3}{2}r_0,0\right)\leq \frac{C_0}{2}.
\end{equation}	
Similarly,
\begin{equation}
   \label{eq:segmentino_ter}
   \frac{c_0}{2}\leq h\left(-\frac{3}{2}r_0,0\right)-h(-2r_0,0)\leq \frac{C_0}{2}.
\end{equation}	
Since $h$ is injective and maps $\sigma_1$ into $[a,b]\times\{0\}$, it follows that
\begin{equation*}
   |\Psi(0,0)-(a,0)| = h(0,0)-h(-2r_0,0) \geq\frac{c_0}{2},
\end{equation*}	
\begin{equation*}
	|\Psi(0,0)-(b,0)| = h(2r_0,0) - h(0,0) \geq\frac{c_0}{2}.
\end{equation*}	
Possibly replacing $c_0$ with $\min\{c_0,2\}$, we obtain that
$\overline{B}^+_{\frac{c_0}{2}}(\Psi(O))\subset [a,b]\times [0,1]$.
By \eqref{eq:8.1},
\begin{equation*}
	|\Psi^{-1}(\xi)| = |\Psi^{-1}(\xi)-\Psi^{-1}(\Psi(O))|\leq\frac{\sqrt 2}{2}r_0<r_0, \qquad \forall \xi \in B^+_{\frac{c_0}{2}}(\Psi(O)),
\end{equation*}	
so that $\Psi^{-1}\left(B^+_{\frac{c_0}{2}}(\Psi(O))\right)\subset \Omega_{r_0}$,
\begin{equation*}
\Psi(\Omega_{r_0})\supset B^+_{\frac{c_0}{2}}(\Psi(O))\supset R,
\end{equation*}
where $R$ is the rectangle
\begin{equation*}
R= \left(\overline{\xi}_1-\frac{c_0}{2\sqrt 2}, \overline{\xi}_1+\frac{c_0}{2\sqrt 2}\right)\times \left(0,\frac{c_0}{2\sqrt 2}\right).
\end{equation*}
Let us consider the homothety
\begin{equation*}
\Theta:[a,b]\times [0,1] \rightarrow\R^2,
\end{equation*}
\begin{equation*}
\Theta(\xi_1,\xi_2) = \frac{2\sqrt 2}{c_0}(\xi_1-\overline{\xi}_1,\xi_2),
\end{equation*}
which satisfies
\begin{equation*}
\Theta(\Psi(O)) = O, \qquad D\Theta = \frac{2\sqrt 2}{c_0} I_2,
\end{equation*}
\begin{equation*}
\Theta([a,b]\times [0,1]) =R^*, \qquad R^* =\left[\frac{2\sqrt 2}{c_0}(a-\overline{\xi}_1),
\frac{2\sqrt 2}{c_0}(b-\overline{\xi}_1)\right]\times
\left[0,
\frac{2\sqrt 2}{c_0}\right],
\end{equation*}
\begin{equation*}
\Theta(\overline{R}) = [-1,1]\times [0,1],
\end{equation*}
\begin{equation*}
D(\Theta\circ \Psi)(x) = \frac{2\sqrt 2}{c_0}D\Psi(x).
\end{equation*}
Its inverse
\begin{equation*}
\Theta^{-1}:R^*\rightarrow [a,b]\times [0,1],
\end{equation*}
\begin{equation*}
\Theta^{-1}(y_1,y_2) = \frac{c_0}{2\sqrt 2}(y_1+\overline{\xi}_1,y_2),
\end{equation*}
satisfies
\begin{equation*}
D\Theta^{-1}= \frac{c_0} {2\sqrt 2}I_2,
\end{equation*}
\begin{equation*}
D((\Theta\circ \Psi)^{-1})(y) = \frac{c_0}{2\sqrt 2}D\Psi^{-1}(\Theta^{-1}(y)).
\end{equation*}
Let us define
\begin{equation*}
\Phi =(\Theta\circ \Psi)^{-1}).
\end{equation*}
We have that $\Phi$ is a conformal diffeomorphism {}from $R^*$ into $\widetilde{\Omega}_{r_0}$ such that
\begin{equation*}
\Omega_{r_0}\supset \Psi^{-1}(R)=\Phi((-1,1)\times(0,1)),
\end{equation*}
\begin{equation}
   \label{eq:gradPhibis}
\frac{c_0r_0}{2C_0}\leq |D\Phi(y)|\leq \frac{r_0}{2},
\end{equation}
\begin{equation}
   \label{eq:gradPhiInvbis}
\frac{4}{r_0}\leq |D\Phi^{-1}(x)|\leq \frac{4C_0}{c_0r_0}.
\end{equation}
By \eqref{eq:gradPhi}, we have that, for every $y\in [-1,1]\times [0,1]$,
\begin{equation}
   \label{eq:stimaPhibis}
|\Phi(y)|= |\Phi(y)-\Phi(O)|\leq \frac{r_0}{2}|y|.
\end{equation}
Given any $x(x_1,x_2)\in \overline{\Omega}_{r_0}$, let $x^* =(x_1,g(x_1))$.
We have
\begin{equation*}
|x-x^*| = |x_2 - g(x_1)| \leq|x_2|+ |g(x_1)-g(0)|\leq (M_0+1)|x|,
\end{equation*}
and, since the segment joining $x$ and $x^*$ is contained in $\overline{\Omega}_{r_0}$,
by \eqref{eq:gradPhiInvbis} we have
\begin{equation}
   \label{eq:stimaPhiInv1}
|\Phi^{-1}(x)-\Phi^{-1}(x^*)|\leq \frac{4C_0}{c_0r_0}(M_0+1)|x|.
\end{equation}
Let un consider the arc $\tau(t)= \Phi^{-1}(t,g_1(t))$, for $t\in [0,x_1]$.
Again by \eqref{eq:gradPhiInvbis}, we have
\begin{multline}
   \label{eq:stimaPhiInv2}
|\Phi^{-1}(x^*)| = |\Phi^{-1}(x^*)-\Phi^{-1}(O)|  =\tau(x_1) -\tau(0) \leq\\
\leq \left|\int_0^{x_1}\tau'(t)dt \right|\leq \frac{4C_0}{c_0r_0}\sqrt{M_0^2+1}\ |x|.
\end{multline}
By \eqref{eq:stimaPhiInv1}, \eqref{eq:stimaPhiInv2}, we have
\begin{equation}
   \label{eq:stimaPhiInvbis}
|\Phi^{-1}(x)| \leq
\frac{K}{r_0}|x|,
\end{equation}
with $K=\frac{4C_0}{c_0}(M_0+1+\sqrt{M_0^2+1})>8$.
{}From this last inequality, we have that
\begin{equation*}
\Phi^{-1}\left(\Omega_{r_0}\cap B_{\frac{r_0}{K}}\right)\subset B_1^+\subset (-1,1)\times(0,1), \qquad \Phi((-1,1)\times(0,1))\supset \Omega_{r_0}\cap B_{\frac{r_0}{K}}.
\end{equation*}

Let $\Phi = (\varphi, \psi)$.
We have that
\begin{equation}
   \label{eq:DPhi}
  D\Phi =\left( \begin{array}{ll}
  \varphi_{y_1} &\varphi_{y_2}\\
   &  \\
  -\varphi_{y_2} &\varphi_{y_1}\\
  \end{array}\right),
\end{equation}
\begin{equation}
   \label{eq:32.1bisluglio}
  det(D\Phi(y)) = |\nabla\varphi(y)|^2,
\end{equation}
\begin{equation}
   \label{eq:DPhi_inversa}
  (D\Phi)^{-1} =\frac{1}{|\nabla \varphi|^2}\left( \begin{array}{ll}
  \varphi_{y_1} &-\varphi_{y_2}\\
   &  \\
  \varphi_{y_2} &\varphi_{y_1}\\
  \end{array}\right).
\end{equation}
Concerning the function $u(y) = v(\Phi(y))$, we can compute
\begin{equation}
   \label{eq:32.3luglio}
   (\nabla v) (\Phi(y)) = [(D\Phi(y))^{-1}]^T\nabla u(y),
\end{equation}
\begin{equation}
   \label{eq:32.2luglio}
   (\Delta v) (\Phi(y)) = \frac{1}{|det(D\Phi(y)|}\divrg(A(y)\nabla u(y)),
\end{equation}
where
\begin{equation}
   \label{eq:33.0luglio}
   A(y) = |det(D\Phi(y)| (D\Phi(y))^{-1} [(D\Phi(y))^{-1}]^T.
\end{equation}
By \eqref{eq:DPhi}--\eqref{eq:DPhi_inversa}, we obtain that
\begin{equation}
   \label{eq:33.0bisluglio}
   A(y) = I_2,
\end{equation}
so that
\begin{equation}
   \label{eq:33.0terluglio}
   (\Delta v) (\Phi(y)) = \frac{1}{|\nabla \varphi(y)|^2}\Delta u(y),
\end{equation}
\begin{equation}
   \label{eq:33.1luglio}
   (\Delta^2 v) (\Phi(y)) = \frac{1}{|\nabla \varphi(y)|^2}\Delta \left(\frac{1}{|\nabla \varphi(y)|^2}\Delta u(y)\right).
\end{equation}
By using the above formulas, some computations allow to derive \eqref{eq:equazione_sol_composta}--\eqref{eq:15.2} {}from \eqref{eq:equazione_piastra_non_div}.

Finally, the boundary conditions \eqref{eq:Dirichlet_sol_composta} follow {}from \eqref{eq:32.3luglio}, \eqref{eq:9.2b} and \eqref{eq:Diric_u_tilde}.
\end{proof}

\begin{proof}[Proof of Lemma \ref{lem:intermezzo}]
Here, we develop an argument which is contained in \cite[Chapter 9]{l:GT}.
By noticing that
$a\cdot\nabla\Delta u = \divrg(\Delta u a)-(\divrg a)\Delta u$,
we can rewrite \eqref{eq:41.1} in the form
\begin{equation*}
\sum_{|\alpha|,|\beta|\leq 2}D^\alpha(a_{\alpha\beta}D^\beta u)=0.
\end{equation*}
Let $\sigma\in\left[\frac{1}{2},1\right)$, $\sigma'=\frac{1+\sigma}{2}$ and let us notice that
\begin{equation}
    \label{eq:3a.1}
\sigma'-\sigma = \frac{1-\sigma}{2}, \qquad 1-\sigma = 2(1-\sigma').
\end{equation}
Let $\xi\in C^\infty_0(\R^2)$ be such that
\begin{equation*}
	0\leq\xi\leq 1,
\end{equation*}
\begin{equation*}
	\xi=1, \hbox{ for } |x|\leq \sigma,
\end{equation*}
\begin{equation*}
	\xi=0, \hbox{ for } |x|\geq \sigma',
\end{equation*}
\begin{equation*}
	|D^k(\xi)|\leq \frac{C}{(\sigma'-\sigma)^k}, \hbox{ for } \sigma\leq \sigma', k=0,1,2.
\end{equation*}
By straightforward computations we have that
\begin{equation*}
\sum_{|\alpha|,|\beta|\leq 2}D^\alpha(a_{\alpha\beta}D^\beta (u\xi))=f,
\end{equation*}
with
\begin{equation*}
f=\sum_{|\alpha|,|\beta|\leq 2}\sum_ {\overset{\scriptstyle \delta_2\leq\alpha}{\scriptstyle
\delta_2\neq0}}{\alpha \choose \beta}
D^{\alpha-\delta_2}a_{\alpha\beta}D^\beta u)D^{\delta_2}\xi+
\sum_{|\alpha|,|\beta|\leq 2}D^\alpha\left[a_{\alpha\beta}
\sum_ {\overset{\scriptstyle \delta_1\leq\beta}{\scriptstyle
\delta_1\neq0}}{\beta \choose \delta_1}
D^{\beta-\delta_1}uD^{\delta_1}\xi\right].
\end{equation*}
By standard regularity estimates (see for instance \cite[Theorem 9.8]{l:a65},
\begin{equation}
    \label{eq:8a.1}
\|u\xi\|_{H^{4+k}(B_1^+)}\leq C\left(\|u\xi\|_{L^{2}(B_1^+)}+
\|f\|_{H^{k}(B_1^+)}\right).
\end{equation}
On the other hand, it follows trivially that
\begin{equation}
    \label{eq:8a.2}
\|f\|_{H^{k}(B_1^+)}\leq CM_1 \sum_{h=0}^{3+k}\frac{1}{(1-\sigma')^{4+k-h}}\|D^h u\|_{L^{2}(B_{\sigma'}^+)}.
\end{equation}
By inserting \eqref{eq:8a.2} in \eqref{eq:8a.1}, by multiplying both members by
$(1-\sigma')^{4+k}$ and by recalling \eqref{eq:3a.1}, we have
\begin{equation}
    \label{eq:8a.3}
(1-\sigma)^{4+k}\|D^{4+k}u\|_{L^{2}(B_{\sigma}^+)}\leq C
\left(\|u\|_{L^{2}(B_1^+)}+\sum_{h=1}^{3+k}(1-\sigma')^h
\|D^{h}u\|_{L^{2}(B_{\sigma'}^+)}
\right)
\end{equation}
Setting
\begin{equation*}
\Phi_j=\sup_{\sigma\in\left[\frac{1}{2},1\right)}(1-\sigma)^j
\|D^{j}u\|_{L^{2}(B_{\sigma}^+)},
\end{equation*}
{}from \eqref{eq:8a.3} we obtain
\begin{equation}
    \label{eq:9a.2}
\Phi_{4+k}\leq C\left(A_{2+k}+
\Phi_{3+k}\right).
\end{equation}
where
\begin{equation*}
A_{2+k}=\|u\|_{L^{2}(B_1^+)}+
\sum_{h=1}^{2+k}\Phi_h.
\end{equation*}
By the interpolation estimate \eqref{eq:3a.2} we have that, for every $\epsilon$, $0<\epsilon<1$ and for every $h\in \N$, $1\leq h\leq 3+k$,
\begin{equation}
    \label{eq:9a.3}
\|D^{h}u\|_{L^{2}(B_{\sigma}^+)}\leq C\left(
\epsilon\|D^{4+k}u\|_{L^{2}(B_{\sigma}^+)}+
\epsilon^{-\frac{h}{4+k-h}}\|u\|_{L^{2}(B_{\sigma}^+)}\right).
\end{equation}
Let $\gamma>0$ and let $\sigma_\gamma\in \left[\frac{1}{2},1\right)$ such that
\begin{equation}
    \label{eq:9a.4}
		\Phi_{3+k}\leq(1-\sigma_\gamma)^{3+k}
\|D^{3+k}u\|_{L^{2}(B_{\sigma_\gamma}^+)}+\gamma.
\end{equation}
By applying \eqref{eq:9a.3} with $h=3+k$, $\epsilon=(1-\sigma_\gamma)\widetilde{\epsilon}$, $\sigma = \sigma_\gamma$, we have
\begin{equation*}
(1-\sigma_\gamma)^{3+k}\|D^{3+k}u\|_{L^{2}(B_{\sigma_\gamma}^+)}\leq
\left(
\widetilde{\epsilon}(1-\sigma_\gamma)^{4+k}\|D^{4+k}u\|_{L^{2}(B_{\sigma_\gamma}^+)}+
\widetilde{\epsilon}^{-(3+k)}\|u\|_{L^{2}(B_{\sigma_\gamma}^+)}\right),
\end{equation*}
so that, by \eqref{eq:9a.4} and by the arbitrariness of $\gamma$, we have
\begin{equation*}
\Phi_{3+k}\leq C
\left(
\widetilde{\epsilon}\Phi_{4+k}+
\widetilde{\epsilon}^{-(3+k)}\|u\|_{L^{2}(B_{1}^+)}\right).
\end{equation*}
By inserting this last inequality in \eqref{eq:9a.2}, we get
\begin{equation*}
\Phi_{4+k}\leq C
\left(A_{2+k}+
\widetilde{\epsilon}^{-(3+k)}\|u\|_{L^{2}(B_{1}^+)}+
\widetilde{\epsilon}\Phi_{4+k}\right),
\end{equation*}
which gives, for $\epsilon =\frac{1}{2C+1}$,
\begin{equation*}
\Phi_{4+k}\leq C
\left(\|u\|_{L^{2}(B_{1}^+)}+
\sum_{h=1}^{2+k}\Phi_{h}\right).
\end{equation*}
By proceeding similarly, we get
\begin{equation*}
\Phi_{4+k}\leq C
\|u\|_{L^{2}(B_{1}^+)},
\end{equation*}
so that
\begin{equation}
    \label{eq:12a.1}
	\|D^{4+k}u\|_{L^{2}(B_{\frac{1}{2}}^+)}	
		\leq2^{4+k}C\|u\|_{L^{2}(B_{1}^+)}, \qquad k=0,1,2.
\end{equation}
\end{proof}
By applying \eqref{eq:9a.3} for a fixed $\epsilon$, $\sigma=\frac{1}{2}$, we can estimates
the derivatives of order $h$, $1\leq h\leq 3$,
\begin{equation}
   \label{eq:12a.1bis}
	\|D^{h}u\|_{L^{2}(B_{\frac{1}{2}}^+)}	
		\leq C\left(\|D^{4+k}u\|_{L^{2}(B_{\frac{1}{2}}^+)}+
		\|u\|_{L^{2}(B_{\frac{1}{2}}^+)}\right).
\end{equation}
By \eqref{eq:12a.1}, \eqref{eq:12a.1bis}, we have
\begin{equation*}
	\|D^{h}u\|_{L^{2}(B_{\frac{1}{2}}^+)}	
		\leq C\|u\|_{L^{2}(B_{1}^+)}, \qquad \hbox{ for } h=1,...,6.
\end{equation*}
By employing an homothety, we obtain \eqref{eq:12a.2}.

\medskip
\noindent
\emph{Acknowledgement:} The authors wish to thank Antonino Morassi for fruitful discussions on the subject of this work.

\bibliographystyle{plain}

\end{document}